\providecommand{\U}[1]{\protect \rule{.1in}{.1in}}
\providecommand{\U}[1]{\protect \rule{.1in}{.1in}}
\providecommand{\U}[1]{\protect \rule{.1in}{.1in}}
\newtheorem{theorem}{Theorem}[section]
\newtheorem{lemma}{Lemma}[section]
\newtheorem{corollary}{Corollary}[section]
\newtheorem{definition}{Definition}[section]
\numberwithin{equation}{section}
\theoremstyle{remark}
\newtheorem{remark}{Remark}[section]
\numberwithin{equation}{section}
\begin{document}
\title[Pseudo-Einstein, eigenvalue estimate and CR uniformization theorem ]{Pseudo-Einstein structure, eigenvalue estimate for the CR Paneitz operator and
its applications to uniformization theorem }
\author{Shu-Cheng Chang}
\address{Department of Mathematics and Taida Institute for Mathematical Sciences
(TIMS), National Taiwan University, Taipei 10617, Taiwan, R.O.C.}
\email{scchang@math.ntu.edu.tw }
\author{Ting-Jung Kuo}
\address{Department of Mathematics, National Taiwan Normal University, Taipei 11677,
Taiwan }
\email{tjkuo1215@ntnu.edu.tw}
\author{Chien Lin}
\address{Yau Mathematical Sciences Center, Tsinghua University, Haidian District,
Beijing 100084, China}
\email{chienlin@mail.tsinghua.edu.cn}
\subjclass{Primary 32V05, 32V20; Secondary 53C56}
\keywords{pseudo-Einstein, CR pluriharmonic operator, CR Paneitz operator, CR
Q-curvature, tangential Cauchy-Riemann equation, Sasakian manifold, CR
uniformization theorem, Sasakian space form. }

\begin{abstract}
In this note, we mainly focus on the existence of pseudo-Einstein contact
forms, an upper bound eigenvalue estimate for the CR Paneitz operator and its
applications to the uniformization theorem for Sasakian space form in an
embeddable closed strictly pseudoconvex CR $3$-manifold. Firstly, the
existence of pseudo-Einstein contact form is confirmed if the CR $3$-manifold
is Sasakian. Secondly, we derive an eigenvalue upper bound estimate for the CR
Paneitz operator and obtain the CR uniformization theorem for a class of CR
$3$-manifolds. At the end, under the positivity assumption of the
pseudohermitian curvature, we derive the existence theorem for pseudo-Einstein
contact forms and uniformization theorems in a closed strictly pseudoconvex CR
$3$-manifold of nonnegative CR Paneitz operator with kernel consisting of the
CR-pluriharmonic functions and the CR $Q$-curvature is CR-pluriharmonic.

\end{abstract}
\maketitle

\section{Introduction}

\ In Riemannian geometry, a Riemannian manifold is called Einstein if the
Ricci curvature tensor is function-proportional to its Riemannian metric. For
dimension greater than $2$, it is equivalent to the constant-proportional
case. In the contrast to the Riemannian geometry situation, there is a
resembling notion that a strictly pseudoconvex CR $(2n+1)$-manifold is called
pseudo-Einstein if the pseudohermitian Ricci curvature tensor is
function-proportional to its Levi metric. The pseudo-Einstein condition is
less rigid than the Einstein condition in Riemannian geometry. Indeed, the CR
contracted Bianchi identity no longer implies the pseudohermitian scalar
curvature $R$ to be a constant due to the presence of pseudohermitian torsion
for $n\geq2$
\[
R_{\alpha \overline{\beta}}{}_{,\beta}=R_{\alpha}-i(n-1)A_{\alpha \beta}%
{}_{,\overline{\beta}}.
\]
Note that any contact form on a closed strictly pseudoconvex $3$-manifold is
actually pseudo-Einstein since the pseudohermitian Ricci tensor has only one
component $R_{1\overline{1}}$.

In \cite{l}, J. Lee showed that the obstruction to the existence of a
pseudo-Einstein contact form $\theta$ is that its first Chern class
$c_{1}(T_{1,0}M)$ vanishes. Indeed, for a closed strictly pseudoconvex $(2n+1)
$-manifold $\left(  M,J,\theta \right)  $ with $c_{1}(T_{1,0}M)=0$ and $n\geq2
$, he proved that $M$ admits a globally defined pseudo-Einstein contact form
if either $M$ admits a contact form $\theta$ with nonnegative pseudohermitian
Ricci curvature tensor or the vanishing pseudohermitian torsion. However, his
method couldn't be applied to the case $n=1$ directly.

Then it is natural to focus on such existence theorem of pseudo-Einstein
contact forms for $n=1$. Of course, we must find another appropriate
definition for the pseudo-Einstein contact form. In fact by Lemma \ref{l1}
below, it is reasonable to view%
\[
W_{1}\doteqdot \left(  R,_{1}-iA_{11},_{\overline{1}}\right)  =0
\]
as the pseudo-Einstein contact form in a closed strictly pseudoconvex CR
$3$-manifold $\left(  M,J,\theta \right)  $.

Before we start to work on the existence of pseudo-Einstein contact forms in a
closed strictly pseudoconvex CR $3$-manifold, we make the following
observation in a closed strictly pseudoconvex CR $(2n+1)$-manifold $(M,J)$
with a choice of pseudohermitian contact form $\theta$.

(i) For $n\geq2$ : Assume that the pseudohermitian Ricci curvature is
positive, it is well-known (\cite{k}, \cite{l}) that we have the solvability
of the inhomogeneous tangential Cauchy-Riemann equation
\begin{subequations}
\begin{equation}
\overline{\partial}_{b}\varphi=\eta \label{2018B}%
\end{equation}
for any $\overline{\partial}_{b}$-closed $(0,1)$-form $\eta.$ That is to say
that
\end{subequations}
\[
H_{\overline{\partial}_{b}}^{0,1}(M)=0.
\]

(ii) For $n=1:$ We consider an embeddable closed strictly pseudoconvex CR
$3$-manifold $(M,\theta)$ with $c_{1}(T_{1,0}M)=0$. There is a pure imaginary
$1$-form%
\begin{equation}
\sigma=\sigma_{\overline{1}}\theta^{\overline{1}}-\sigma_{1}\theta^{1}%
+i\sigma_{0}\theta \label{25a}%
\end{equation}
such that
\[
d\omega_{1}^{1}=d\sigma.
\]
Due to J. J. Kohn's result (Lemma \ref{l3}), we observe that there are a
complex function
\[
\varphi=u+iv\in C_{%
\mathbb{C}
}^{\infty}\left(  M\right)
\]
and $\gamma=\gamma_{\overline{1}}\theta^{\overline{1}}\in \Omega^{0,1}\left(
M\right)  \cap \ker \left(  \square_{b}\right)  $ such that%
\begin{equation}
\overline{\partial}_{b}\varphi=\sigma_{\overline{1}}\theta^{\overline{1}%
}-\gamma_{\overline{1}}\theta^{\overline{1}}\label{2018}%
\end{equation}
with
\[
\overline{\partial}_{b}(\sigma_{\overline{1}}\theta^{\overline{1}})=0.
\]
Here $\square_{b}=2\left(  \overline{\partial}_{b}\overline{\partial}%
_{b}^{\ast}+\overline{\partial}_{b}^{\ast}\overline{\partial}_{b}\right)  $ is
the Kohn-Rossi Laplacian. Then it is the natural question when we have the
solvability of the inhomogeneous tangential Cauchy-Riemann equation (i.e.
$\gamma=0$)
\begin{equation}
\overline{\partial}_{b}\varphi=\sigma_{\overline{1}}\theta^{\overline{1}%
}.\label{2018A}%
\end{equation}

In this paper, we mainly focus on the existence of pseudo-Einstein contact
forms as in Corollary \ref{c2}, Theorem \ref{t2}, Theorem \ref{t3a} and
Theorem \ref{t4}, an upper bound eigenvalue estimate for the CR Paneitz
operator as in Theorem \ref{t3}, Corollary \ref{c4} and its applications to
the uniformization theorem for Sasakian space form as in Corollary \ref{c4},
Corollary \ref{c5} and Corollary \ref{c6} in a closed strictly pseudoconvex CR
$3$-manifold.

We first state one of the main theorems as follows:

\begin{theorem}
\label{t1}If $(M,J,\theta)$ is an embeddable closed strictly pseudoconvex CR
$3$-manifold with $c_{1}(T_{1,0}M)=0$. \ Then

(i) $\widetilde{\theta}=e^{\frac{\left(  f+2u\right)  }{3}}\theta$ is a
pseudo-Einstein contact form if and only if $f$ satisfies the third-order
partial differential equation%
\begin{equation}
P_{1}f=i\left(  A_{11}\gamma_{\overline{1}}-\gamma_{1,0}\right)  .\label{7}%
\end{equation}
Here $P_{1}$ is a third-order CR-pluriharmonic operator
\[
P_{1}f=f_{\overline{1}11}+iA_{11}f_{\overline{1}}.
\]
(ii) In particular, $\widetilde{\theta}=e^{\frac{\left(  f+2u\right)  }{3}%
}\theta$ is a pseudo-Einstein contact form for a CR-pluriharmonic function $f$
if and only if the equality holds
\begin{equation}
\left(  A_{11}\gamma_{\overline{1}}-\gamma_{1,0}\right)  =0.\label{13}%
\end{equation}

\end{theorem}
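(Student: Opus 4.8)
The plan is to translate the pseudo-Einstein condition into the vanishing of the transformed defect $\widetilde{W}_1 = \widetilde{R}_{,1} - i\widetilde{A}_{11,\overline{1}}$ and then to read off the resulting equation for $f$. By Lemma \ref{l1}, $\widetilde{\theta} = e^{(f+2u)/3}\theta$ is pseudo-Einstein precisely when $\widetilde{W}_1 = 0$, so the whole statement reduces to computing $\widetilde{W}_1$ under the conformal change $\widetilde{\theta} = e^{2\lambda}\theta$ with $2\lambda = (f+2u)/3$. First I would record the standard transformation laws for $R$ and $A_{11}$ under $\theta \mapsto e^{2\lambda}\theta$ in dimension three and assemble them into a transformation law for the defect. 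The output I expect is that the third-order CR-pluriharmonic operator $P_1$ governs this change, in the form $\widetilde{W}_1 = e^{-4\lambda}\big(W_1 - 3P_1(2\lambda)\big)$; since $2\lambda = (f+2u)/3$ this reads $\widetilde{W}_1 = e^{-4\lambda}\big(W_1 - P_1(f+2u)\big)$, so by linearity $\widetilde{W}_1 = 0$ is equivalent to $P_1 f = W_1 - 2P_1 u$. This is where the normalization of the exponent, the factor $\tfrac13$ and the coefficient $2$ in front of $u$, is forced.

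The heart of the argument is then to identify the right-hand side $W_1 - 2P_1 u$ with the claimed inhomogeneous term $i(A_{11}\gamma_{\overline{1}} - \gamma_{1,0})$. For this I would use the hypothesis $c_1(T_{1,0}M) = 0$ in the sharp form $d\omega_1^1 = d\sigma$ coming from \eqref{25a}. Expanding both sides in the admissible coframe $\{\theta,\theta^1,\theta^{\overline{1}}\}$ by means of the structure equations $d\theta = i\theta^1\wedge\theta^{\overline{1}}$ and $d\theta^1 = \theta^1\wedge\omega_1^1 + A^1{}_{\overline{1}}\,\theta\wedge\theta^{\overline{1}}$, I would extract the $\theta\wedge\theta^1$-component. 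On the curvature side this component is exactly $W_1$, which is the classical fact that the $\theta\wedge\theta^\alpha$ part of the Ricci form $d\omega_1^1$ is the pseudo-Einstein defect; on the $\sigma$-side it is an explicit expression in the covariant derivatives of $\sigma_1,\sigma_{\overline{1}}$ and $\sigma_0$. Substituting the Kohn decomposition $\overline{\partial}_b\varphi = (\sigma_{\overline{1}} - \gamma_{\overline{1}})\theta^{\overline{1}}$ from \eqref{2018}, together with $\overline{\partial}_b(\sigma_{\overline{1}}\theta^{\overline{1}}) = 0$, the reality of $\sigma$, and $\gamma \in \ker(\square_b)$, I expect the $\varphi$-contribution to collapse to $2P_1 u$ and the remaining $\gamma$-contribution to be precisely $i(A_{11}\gamma_{\overline{1}} - \gamma_{1,0})$, which yields \eqref{7}.

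Part (ii) is then immediate from (i). If $f$ is CR-pluriharmonic it lies in the kernel of the CR-pluriharmonic operator $P_1$, so $P_1 f = 0$, and \eqref{7} collapses to $i(A_{11}\gamma_{\overline{1}} - \gamma_{1,0}) = 0$, which is \eqref{13}; conversely, when \eqref{13} holds every CR-pluriharmonic $f$ solves \eqref{7}, so $\widetilde{\theta}$ is pseudo-Einstein.

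The step I expect to be the main obstacle is the structure-equation computation in the second paragraph: keeping careful track of the covariant derivatives, the torsion terms in $d\theta^1$, and the three components of the pure imaginary form $\sigma$, and then showing that after inserting the $\overline{\partial}_b$-equation the many terms coming from $\varphi = u+iv$, in particular the imaginary part $v$ and the $(1,0)$-derivative $\varphi_1$ that is not directly supplied by \eqref{2018}, conspire to leave exactly $2P_1 u$, while the $\gamma$-terms organize into $i(A_{11}\gamma_{\overline{1}} - \gamma_{1,0})$. The commutation relations for covariant derivatives and the Kohn--Rossi theory of Lemma \ref{l3} will be needed to discard the $\gamma$-derivative contributions that do not survive in \eqref{7}, and verifying the precise cancellations, which simultaneously pins down the constants in the transformation law, is the delicate part of the proof.
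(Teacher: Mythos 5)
Your proposal follows essentially the same route as the paper: reduce the pseudo-Einstein condition to $\widetilde{W}_1=0$ via Lemma~\ref{l1}, apply the conformal transformation law for $W_1$, and use the hypothesis $c_1(T_{1,0}M)=0$ together with the structure equations and Kohn's Hodge decomposition to establish the identity $W_1=2P_1u+i\left(A_{11}\gamma_{\overline{1}}-\gamma_{1,0}\right)$, which is exactly the content of the paper's Lemmas~\ref{l2}--\ref{l4}. The only slip is the conformal weight in your transformation law---the paper's $(\ref{5})$ reads $\widetilde{W}_1=e^{-3\lambda}\left(W_1-6P_1\lambda\right)$ rather than carrying the prefactor $e^{-4\lambda}$---but since only the vanishing of the bracket $W_1-6P_1\lambda$ matters for the equivalence, this does not affect the argument.
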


In \cite{le}, Lempert showed that a closed strictly pseudoconvex CR
$3$-manifold $(M,J,\theta)$ with vanishing pseudohermitian torsion is
embeddable. As a consequence, we are able to show that one of existence
theorems for the pseudo-Einstein contact form in this paper.

\begin{corollary}
\label{c2} Let $(M,J,\theta)$ be a closed strictly pseudoconvex CR
$3$-manifold with $c_{1}(T_{1,0}M)=0$. Then $M$ admits a globally defined
pseudo-Einstein contact form $e^{\frac{\left(  f+2u\right)  }{3}}\theta$ for
any CR-pluriharmonic function $f$ if the pseudohermitian torsion is vanishing
(i.e. Sasakian). More precisely, we have
\[
\gamma_{1,0}=0.
\]

\end{corollary}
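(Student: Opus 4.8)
The plan is to feed the two geometric consequences of the Sasakian hypothesis into Theorem \ref{t1}(ii). First, a Sasakian structure has vanishing pseudohermitian torsion, $A_{11}=0$, and by Lempert's embeddability theorem (\cite{le}) $M$ is embeddable, so Theorem \ref{t1} applies. Since $P_{1}$ is the CR-pluriharmonic operator, a CR-pluriharmonic $f$ satisfies $P_{1}f=0$; hence by Theorem \ref{t1}(ii) the contact form $\widetilde{\theta}=e^{(f+2u)/3}\theta$ is pseudo-Einstein exactly when \eqref{13} holds, i.e. $A_{11}\gamma_{\overline1}-\gamma_{1,0}=0$. With $A_{11}=0$ this reduces to the single identity $\gamma_{1,0}=0$, so the whole statement comes down to showing that the $\square_b$-harmonic $(0,1)$-form $\gamma=\gamma_{\overline1}\theta^{\overline1}$ produced in \eqref{2018} is covariantly constant along the Reeb direction.

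The next step is to recast $\gamma_{1,0}=0$ invariantly. Because the torsion vanishes, $d\theta^{\overline1}=\theta^{\overline1}\wedge\omega_{\overline1}^{\overline1}$, and a short Cartan computation (using $\gamma(T)=0$) gives $\mathcal{L}_{T}\gamma=(\nabla_{T}\gamma_{\overline1})\theta^{\overline1}=\gamma_{\overline1,0}\theta^{\overline1}$, where $T$ is the Reeb field. Since $\gamma_{1,0}=\overline{\gamma_{\overline1,0}}$, the desired identity is equivalent to the Reeb-invariance $\mathcal{L}_{T}\gamma=0$ of the harmonic form. Thus it is enough to prove that the harmonic representative $\gamma$ is fixed by the Reeb flow.

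To get the invariance I would exploit that, $M$ being Sasakian, each time-$t$ map $\phi_{t}$ of the Reeb flow is a CR automorphism fixing $\theta$; hence $\phi_{t}$ preserves the Tanaka-Webster connection, the Levi metric and $\overline{\partial}_b$, commutes with $\square_b$, and restricts to a one-parameter group of unitary maps of the finite-dimensional space $\mathcal{H}^{0,1}=\ker\square_b\cap\Omega^{0,1}(M)$. As harmonic representatives of a Kohn-Rossi class are unique, it suffices to check that $\phi_{t}$ fixes the class of $\gamma$, equivalently that $\mathcal{L}_{T}\gamma=\gamma_{\overline1,0}\theta^{\overline1}$ is $\overline{\partial}_b$-exact; being at the same time $\square_b$-harmonic it would then be forced to vanish.

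The main obstacle is precisely this last point: a priori the unitary action might rotate $\gamma$ with a nonzero imaginary weight, $\mathcal{L}_{T}\gamma=i\lambda\gamma$ with $\lambda\neq0$, and excluding this is the heart of the matter. I expect to settle it using the transverse Kähler geometry of the Reeb foliation, whose transverse Hodge theory forces $\square_b$-harmonic $(0,1)$-forms to be basic and hence Reeb-invariant. A more computational and self-contained alternative, which is likely the route matching the proof of Theorem \ref{t1}, is to differentiate the defining relation $d\omega_{1}^{1}=d\sigma$ with $A_{11}=0$: reading off the $\theta\wedge\theta^{1}$ component expresses $\sigma_{1,0}$ through $\sigma_{0,1}$ and $R_{,1}$, and after subtracting $\overline{\partial}_b\varphi$ and using the coclosedness $\gamma_{\overline1,1}=0$ coming from $\square_b\gamma=0$ together with the commutation relations (in which all torsion terms drop because $A_{11}=0$) one arrives at $\gamma_{1,0}=0$. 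Carrying this bookkeeping through cleanly — and in particular seeing that no curvature-sign hypothesis is needed, which reflects the fact that the Reeb field is Killing in the Sasakian case — is the part I expect to demand the most care.
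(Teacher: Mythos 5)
Your reduction is the same as the paper's: with $A_{11}=0$ (and Lempert's theorem giving embeddability), Theorem \ref{t1}(ii) shows everything comes down to the single identity $\gamma_{1,0}=0$. But you do not actually prove that identity, and both routes you sketch for it have problems. The transverse-Hodge route rests on the claim that every $\square_{b}$-harmonic $(0,1)$-form on a closed Sasakian $3$-manifold is Reeb-invariant (basic); this is false in general. On a circle bundle over a higher-genus Riemann surface the Kohn--Rossi space $H^{0,1}_{\overline{\partial}_{b}}$ decomposes into Fourier weights under the Reeb action and nonzero weights do occur, so Reeb-invariance is not a property of harmonic $(0,1)$-forms per se. What could save this route is exactly the point you flag and defer: showing that the particular class $[\sigma_{\overline{1}}\theta^{\overline{1}}]$ is fixed by the Reeb flow (e.g.\ by averaging $\sigma$ over the closure of the flow, using $\mathcal{L}_{T}d\omega_{1}{}^{1}=0$); but that is the entire content of the corollary and you leave it open. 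Your ``computational'' alternative fares no better: reading off components of $d\sigma=d\omega_{1}{}^{1}$ only reproduces the structure relations of Lemma \ref{l2} (with $A_{11}=0$ they give $\sigma_{1,0}=-i\sigma_{0,1}$, etc.), which express $\gamma_{1,0}$ in terms of other quantities but cannot force it to vanish pointwise. The vanishing is a genuinely global fact.

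The paper's actual mechanism is a short $L^{2}$ argument that your proposal is missing. From \eqref{0} with $A_{11}=0$ one has the pointwise identity $R_{,1}=2u_{\overline{1}11}-i\gamma_{1,0}$, hence $\gamma_{\overline{1},0}=-iR_{,\overline{1}}+2iu_{,1\overline{1}\overline{1}}$ with $R$ and $u$ real and globally defined. Then
\[
\int_{M}\left\vert \gamma_{1,0}\right\vert ^{2}d\mu=-\int_{M}\gamma_{1}\gamma_{\overline{1},00}\,d\mu
=i\int_{M}\gamma_{1}\left(  R_{,0}-2u_{,1\overline{1}0}\right)  _{,\overline{1}}d\mu
=-i\int_{M}\gamma_{1,\overline{1}}\left(  R_{,0}-2u_{,1\overline{1}0}\right)  d\mu=0,
\]
where the commutation relations \eqref{6} (all torsion terms drop) let one interchange the $0$ and $\overline{1}$ derivatives, and the last step uses $\gamma_{1,\overline{1}}=0$ from $\square_{b}\gamma=0$. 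This is the step you would need to supply; without it the proposal is an outline of the right target rather than a proof.
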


Note that we do not know whether it holds that $\gamma=0$ in the situation as
in Corollary \ref{c2}. However, by deriving the B\^{o}chner-type estimate as
in the Lemma \ref{l8}, we have%
\[%
{\displaystyle \int \limits_{M}}
\left(  2R-Tor\right)  \left(  \gamma,\gamma \right)  d\mu+2%
{\displaystyle \int \limits_{M}}
\left \vert \gamma_{1,1}\right \vert ^{2}d\mu+\frac{1}{2}%
{\displaystyle \int \limits_{M}}
\left(  P_{0}f\right)  fd\mu=0
\]
if $\widetilde{\theta}=e^{\frac{\left(  f+2u\right)  }{3}}\theta$ is a
pseudo-Einstein contact form. It will conclude
\[
\gamma=0
\]
under certain pseudohermitian geometric assumptions and obtain the solvability
of the inhomogeneous tangential Cauchy-Riemann equation (\ref{2018A}).
Hereafter, we will use the result in \cite{ccy} implicitly, which says that if
$(M,J,\theta)$ is a closed strictly pseudoconvex CR $3$-manifold with
nonnegative CR Paneitz operator and positive Tanaka-Webster scalar curvature,
then $(M,J,\theta)$ is embeddable. This ensures the embeddability in the
hypothesis holds for closed strictly pseudoconvex CR $3$-manifolds.

\begin{theorem}
\label{t2} If $(M,J,\theta)$ is a closed strictly pseudoconvex CR $3$-manifold
with $c_{1}(T_{1,0}M)=0$ and nonnegative CR Paneitz operator $P_{0} $. Assume
that the pseudohermitian curvature is $\left(  \frac{1}{2},0\right)
$-positive
\[
R>|A_{11}|.
\]
Then $\widetilde{\theta}=e^{\frac{\left(  f+2u\right)  }{3}}\theta$ is a
pseudo-Einstein contact form for any CR-pluriharmonic function $f$ if and only
if the solvability of the inhomogeneous tangential Cauchy-Riemann equation
(\ref{2018A}).
\end{theorem}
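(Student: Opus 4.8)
The plan is to prove the two implications separately, using the pointwise criterion (\ref{13}) of Theorem~\ref{t1}(ii) to translate the pseudo-Einstein property into a statement about $\gamma$, and then to kill $\gamma$ by combining the B\^{o}chner-type identity of Lemma~\ref{l8} with the curvature hypothesis. Before doing either direction I would record that the hypotheses put us inside the embeddable setting where all the earlier results apply: since $R>|A_{11}|\geq 0$ we have $R>0$, and together with $P_{0}\geq 0$ the theorem of \cite{ccy} forces $(M,J,\theta)$ to be embeddable, so the Kohn decomposition (\ref{2018}) of Lemma~\ref{l3}, Theorem~\ref{t1}, and Lemma~\ref{l8} are all at our disposal.

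The direction from solvability to pseudo-Einstein is immediate. If (\ref{2018A}) is solvable then in (\ref{2018}) one may take $\gamma=0$, so the criterion $A_{11}\gamma_{\overline{1}}-\gamma_{1,0}=0$ of (\ref{13}) holds trivially, and Theorem~\ref{t1}(ii) yields a pseudo-Einstein contact form $\widetilde{\theta}=e^{\frac{(f+2u)}{3}}\theta$ for every CR-pluriharmonic $f$.

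For the converse I would first note that condition (\ref{13}) does not involve $f$, so ``pseudo-Einstein for any CR-pluriharmonic $f$'' is equivalent to the single equation $A_{11}\gamma_{\overline{1}}-\gamma_{1,0}=0$. Assuming this, I would fix one CR-pluriharmonic $f$ and substitute it into Lemma~\ref{l8}:
\[
\int_{M}(2R-Tor)(\gamma,\gamma)\,d\mu+2\int_{M}|\gamma_{1,1}|^{2}\,d\mu+\frac{1}{2}\int_{M}(P_{0}f)f\,d\mu=0.
\]
Because $f$ is CR-pluriharmonic it lies in the kernel of the CR Paneitz operator ($P_{1}f=0$ forces $P_{0}f=0$), so the last integral drops out. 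The two surviving integrands are nonnegative: the middle one visibly, and the first because the $(\frac{1}{2},0)$-positivity gives the pointwise bound $|Tor(\gamma,\gamma)|\leq 2|A_{11}|\,|\gamma|^{2}$ and hence $(2R-Tor)(\gamma,\gamma)\geq 2(R-|A_{11}|)|\gamma|^{2}\geq 0$. A vanishing sum of nonnegative integrals forces each term to vanish, and the strict inequality $R>|A_{11}|$ then compels $\gamma_{\overline{1}}\equiv 0$, i.e. $\gamma=0$, which is exactly the solvability of (\ref{2018A}).

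The analytic weight of the whole argument sits in Lemma~\ref{l8} (assumed here), whose proof is a CR B\^{o}chner computation turning the pseudo-Einstein condition into the integrated curvature--torsion identity displayed above. Once that identity is in hand the rest is a positivity argument, and the only steps needing care are the two pointwise facts used: that a CR-pluriharmonic function is annihilated by $P_{0}$, so the Paneitz term drops, and the Cauchy--Schwarz bound $|Tor(\gamma,\gamma)|\leq 2|A_{11}|\,|\gamma|^{2}$ on the single torsion component $A_{11}$. The strict inequality $R>|A_{11}|$ is precisely what is needed to pass from the vanishing of the two nonnegative integrals to the pointwise vanishing of $\gamma$; with only the non-strict bound one could conclude $\gamma=0$ merely on the locus where $R>|A_{11}|$.
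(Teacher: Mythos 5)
Your proposal is correct and follows essentially the same route as the paper: the easy direction via Theorem~\ref{t1}(ii), and the converse by feeding the pseudo-Einstein hypothesis into the B\^{o}chner identity (\ref{26A}) of Lemma~\ref{l8} and using the $\left(\frac{1}{2},0\right)$-positivity to force $\gamma=0$. The only cosmetic difference is that you discard the Paneitz term because $P_{0}f=0$ for CR-pluriharmonic $f$, while the paper discards it using the nonnegativity of $P_{0}$; both are valid, and you correctly flag the embeddability input from \cite{ccy} that the paper invokes implicitly.
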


We observe that, for a strictly pseudoconvex $3$-manifold $(M^{3},J,\theta)$,
we have the invariance property for the CR-pluriharmonic operator $P_{1}$ and
CR Paneitz operator $P_{0}$. It is to say that, for rescaled contact form
$\widetilde{\theta}=e^{2g}\theta,$ we have
\begin{equation}
\widetilde{P}_{1}=e^{-3g}P_{1}\text{ \textrm{and}\  \ }\widetilde{P}%
_{0}=e^{-4g}P_{0}.\label{A}%
\end{equation}
Then the nonnegativity of CR Paneitz operator $P_{0}$ is CR conformal
invariant (\cite{h}).

Since the CR Paneitz operator $P_{0}$ is nonnegative (\cite{ccc}) if the
pseudohermitian torsion is vanishing, it follows from Theorem \ref{t2} and
Corollary \ref{c2} that

\begin{corollary}
\label{c3} If $(M,J,\theta)$ is a closed strictly pseudoconvex CR $3$-manifold
with $c_{1}(T_{1,0}M)=0$. Assume that the manifold is Sasakian and the
Tanaka-Webster scalar curvature is positive, then we have the solvability of
the inhomogeneous tangential Cauchy-Riemann equation (\ref{2018A}). That is to
say that the Kohn--Rossi cohomology class of $\sigma_{\overline{1}}%
\theta^{\overline{1}}$ is vanishing.
\end{corollary}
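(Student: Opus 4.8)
The plan is to derive this statement purely as a chaining of Corollary \ref{c2} and Theorem \ref{t2}, after checking that the Sasakian hypothesis supplies exactly the curvature- and operator-positivity that those two results demand. First I would record the two structural consequences of the vanishing torsion assumption. Since $(M,J,\theta)$ is Sasakian we have $A_{11}=0$, so the $\left(\frac{1}{2},0\right)$-positivity condition $R>|A_{11}|$ appearing in Theorem \ref{t2} collapses to the plain positivity $R>0$ of the Tanaka--Webster scalar curvature, which is precisely what is assumed here. Moreover, by the result of \cite{ccc} the vanishing of the pseudohermitian torsion forces the CR Paneitz operator to be nonnegative, $P_{0}\geq 0$. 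Together with the standing hypothesis $c_{1}(T_{1,0}M)=0$, this checks off every hypothesis of Theorem \ref{t2}.

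Next I would invoke Corollary \ref{c2}: for a Sasakian $M$ with $c_{1}(T_{1,0}M)=0$, the rescaled contact form $\widetilde{\theta}=e^{\frac{\left(f+2u\right)}{3}}\theta$ is a pseudo-Einstein contact form for every CR-pluriharmonic function $f$; this is the content of the conclusion $\gamma_{1,0}=0$ there, which (via Theorem \ref{t1}(ii), using $A_{11}=0$) is exactly the condition $\left(A_{11}\gamma_{\overline{1}}-\gamma_{1,0}\right)=0$. This statement is precisely the left-hand side of the equivalence furnished by Theorem \ref{t2}. Feeding it into Theorem \ref{t2}, whose hypotheses I have just verified, yields the solvability of the inhomogeneous tangential Cauchy--Riemann equation (\ref{2018A}).

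Finally I would translate the analytic conclusion into the cohomological one. Recall from the setup (\ref{2018}) that the function $\varphi=u+iv$ solves $\overline{\partial}_{b}\varphi=\sigma_{\overline{1}}\theta^{\overline{1}}-\gamma_{\overline{1}}\theta^{\overline{1}}$ with $\gamma=\gamma_{\overline{1}}\theta^{\overline{1}}\in \Omega^{0,1}\left(M\right)\cap \ker \left(\square_{b}\right)$ and with $\sigma_{\overline{1}}\theta^{\overline{1}}$ being $\overline{\partial}_{b}$-closed. Solvability of (\ref{2018A}) is exactly the assertion $\gamma=0$, i.e.\ that the $\overline{\partial}_{b}$-closed form $\sigma_{\overline{1}}\theta^{\overline{1}}$ lies in the image of $\overline{\partial}_{b}$; equivalently, its Kohn--Rossi cohomology class vanishes.

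I do not expect a serious analytic obstacle here, since the corollary is a bookkeeping consequence of the two deeper results already in hand. The only point requiring genuine care is verifying that the Sasakian condition is strong enough to supply simultaneously the operator positivity $P_{0}\geq 0$ and the curvature positivity in the exact form demanded by Theorem \ref{t2}; once $A_{11}=0$ is used to reconcile $R>|A_{11}|$ with the assumed $R>0$, the hypotheses of Corollary \ref{c2} and Theorem \ref{t2} line up and the two results chain together without further work.
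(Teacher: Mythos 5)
Your proposal is correct and follows exactly the route the paper takes: the paper derives Corollary \ref{c3} by noting that vanishing torsion gives $P_{0}\geq 0$ (via \cite{ccc}) and reduces $R>|A_{11}|$ to $R>0$, then chains Corollary \ref{c2} (pseudo-Einstein contact form exists) into the "only if" direction of Theorem \ref{t2} to conclude solvability of (\ref{2018A}), i.e.\ $\gamma=0$ and the vanishing of the Kohn--Rossi class. No further comment is needed.
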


Let $(M,J,\theta)$ be a closed strictly pseudoconvex CR $3$-manifold with
$c_{1}(T_{1,0}M)=0.$ With the notations as in section $2$ and section $3,$
another B\^{o}chner-type equality holds%
\[
\int_{M}\left(  R-\frac{1}{2}Tor-\frac{1}{2}Tor^{\prime}\right)  \left(
\gamma,\gamma \right)  d\mu+\int_{M}\left \vert \gamma_{1,1}\right \vert ^{2}%
d\mu+\int_{M}Qud\mu+\int_{M}\left(  P_{0}u\right)  ud\mu=0.
\]
With the help of the notion of $\left(  C_{0},C_{1}\right)  $%
\textit{-convexity}, we have the eigenvalue estimate for the CR Paneitz
operator $P_{0}$ in terms of the CR \ $Q$-curvature\textit{. }

\begin{theorem}
\label{t3} Let $(M,J,\theta)$ be a closed strictly pseudoconvex CR
$3$-manifold of $c_{1}(T_{1,0}M)=0$ and nonnegative CR Paneitz operator
$P_{0}$ with kernel consisting of the CR-pluriharmonic functions. Assume that
\ the pseudohermitian curvature is $\left(  \frac{1}{2},\frac{1}{2}\right)
$-positive
\[
R>(|A_{11}|+\left \vert A_{11,\overline{1}}\right \vert ).
\]
If $\widetilde{\theta}=e^{\frac{\left(  f+2u\right)  }{3}}\theta$ is a
pseudo-Einstein contact form for any CR-pluriharmonic function $f$, then one
can derive the upper bound estimate for the first eigenvalue of the CR Paneitz
operator $P_{0}$
\begin{equation}
\Lambda^{2}\int_{M}(u^{\bot})^{2}d\mu \leq \int_{M}(Q^{\bot})^{2}d\mu
\label{2018H}%
\end{equation}
with the decomposition $Q=Q_{\ker}+Q^{\perp}$ and $u=u_{\ker}+u^{\perp}$. Here
$\Lambda$ is the positive constant as in (\ref{41}).
\end{theorem}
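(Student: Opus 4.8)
The plan is to start from the second B\^{o}chner-type equality recorded just before the statement,
\[
\int_M\Big(R-\tfrac12 Tor-\tfrac12 Tor^{\prime}\Big)(\gamma,\gamma)\,d\mu+\int_M|\gamma_{1,1}|^2\,d\mu+\int_M Qu\,d\mu+\int_M(P_0u)u\,d\mu=0,
\]
and to turn it into an inequality by discarding the two manifestly controlled terms. The first point is that the hypothesis $R>|A_{11}|+|A_{11,\overline1}|$ is precisely the $(\tfrac12,\tfrac12)$-positivity needed to force the zeroth-order curvature--torsion form to be nonnegative: with the standard expressions for $Tor$ and $Tor^{\prime}$ and the pointwise bounds $|Tor(\gamma,\gamma)|\leq 2|A_{11}|\,|\gamma|^2$, $|Tor^{\prime}(\gamma,\gamma)|\leq 2|A_{11,\overline1}|\,|\gamma|^2$, one gets
\[
\Big(R-\tfrac12 Tor-\tfrac12 Tor^{\prime}\Big)(\gamma,\gamma)\ \geq\ \big(R-|A_{11}|-|A_{11,\overline1}|\big)\,|\gamma|^2\ \geq\ 0.
\]
Since $\int_M|\gamma_{1,1}|^2\,d\mu\geq 0$ as well, substituting these two bounds into the equality leaves the clean inequality
\[
\int_M(P_0u)u\,d\mu+\int_M Qu\,d\mu\ \leq\ 0.
\]

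Next I would exploit the spectral hypothesis on $P_0$. Because $P_0$ is nonnegative, self-adjoint, and its kernel consists exactly of the CR-pluriharmonic functions, on a closed manifold it has discrete spectrum and a well-defined first nonzero eigenvalue $\Lambda>0$, characterised by $\int_M(P_0w)w\,d\mu\geq\Lambda\int_M w^2\,d\mu$ for every $w$ orthogonal to $\ker P_0$. Writing the $L^2$-orthogonal decompositions $u=u_{\ker}+u^{\perp}$ and $Q=Q_{\ker}+Q^{\perp}$, self-adjointness gives $\int_M(P_0u)u\,d\mu=\int_M(P_0u^{\perp})u^{\perp}\,d\mu$, while orthogonality of the two subspaces annihilates the cross terms in $\int_M Qu\,d\mu=\int_M Q_{\ker}u_{\ker}\,d\mu+\int_M Q^{\perp}u^{\perp}\,d\mu$. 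Feeding this into the previous display yields
\[
\int_M(P_0u^{\perp})u^{\perp}\,d\mu+\int_M Q^{\perp}u^{\perp}\,d\mu+\int_M Q_{\ker}u_{\ker}\,d\mu\ \leq\ 0.
\]

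The decisive step, and the one I expect to be the main obstacle, is the control of the kernel interaction $\int_M Q_{\ker}u_{\ker}\,d\mu$: this is exactly where the notion of $(C_0,C_1)$-convexity should enter, the point being to show that this cross term is nonnegative so that it can be dropped. Granting $\int_M Q_{\ker}u_{\ker}\,d\mu\geq 0$, the inequality reduces to $\int_M(P_0u^{\perp})u^{\perp}\,d\mu\leq-\int_M Q^{\perp}u^{\perp}\,d\mu$. Applying the eigenvalue lower bound on the left and Cauchy--Schwarz on the right gives
\[
\Lambda\int_M(u^{\perp})^2\,d\mu\ \leq\ \int_M(P_0u^{\perp})u^{\perp}\,d\mu\ \leq\ \Big(\int_M(Q^{\perp})^2\,d\mu\Big)^{1/2}\Big(\int_M(u^{\perp})^2\,d\mu\Big)^{1/2}.
\]
Dividing by $\big(\int_M(u^{\perp})^2\,d\mu\big)^{1/2}$ and squaring produces exactly the asserted bound $\Lambda^2\int_M(u^{\perp})^2\,d\mu\leq\int_M(Q^{\perp})^2\,d\mu$. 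Thus, beyond routine spectral theory and the curvature sign supplied by $(\tfrac12,\tfrac12)$-positivity, the only genuine analytic content is the sign of the kernel cross term, for which the $(C_0,C_1)$-convexity is tailor-made; I would devote the bulk of the write-up to verifying that step.
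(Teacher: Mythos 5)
Your reduction of the identity (\ref{2018BB}) to $\int_M(P_0u)u\,d\mu+\int_MQu\,d\mu\leq 0$ and the final Cauchy--Schwarz/eigenvalue step both match the paper, but the step you yourself flag as the crux --- the sign of $\int_MQ_{\ker}u_{\ker}\,d\mu$ --- is left unproved, and the tool you propose for it is the wrong one. The $\left(\tfrac12,\tfrac12\right)$-convexity is already spent in your first step (it is exactly what makes the curvature--torsion form nonnegative, via Lemma \ref{l6}); it says nothing about the kernel cross term. What actually controls that term is the hypothesis you never invoke: that $\widetilde{\theta}=e^{(f+2u)/3}\theta$ is pseudo-Einstein for \emph{any} CR-pluriharmonic $f$. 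In the paper this hypothesis is fed into the first B\^ochner identity (\ref{26A}) of Lemma \ref{l8}: since $P_0f=0$ for CR-pluriharmonic $f$, that identity reads $\int_M(2R-Tor)(\gamma,\gamma)\,d\mu+2\int_M|\gamma_{1,1}|^2\,d\mu=0$, and the positivity $R>|A_{11}|$ forces $\gamma=0$. Once $\gamma=0$, the cross term is not merely nonnegative but vanishes identically: by (\ref{0}) one then has $W_1=2P_1u$, so $\int_MQu_{\ker}\,d\mu=-\int_M(P_0u)u_{\ker}\,d\mu=-\int_Mu\,(P_0u_{\ker})\,d\mu=0$ by self-adjointness and $u_{\ker}\in\ker P_0$ (this is the content of the paper's passage from (\ref{2018BB}) to (\ref{2018F}), where the needed input is $\int_MTor(d_bu_{\ker},\gamma)\,d\mu=0$). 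The kernel hypothesis $\ker P_0=\{\text{CR-pluriharmonic functions}\}$ enters precisely to guarantee that $u_{\ker}$ is CR-pluriharmonic so that this vanishing applies.

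So the gap is genuine: without appealing to the pseudo-Einstein assumption (through Lemma \ref{l8}) you have no mechanism to control $\int_MQ_{\ker}u_{\ker}\,d\mu$, and searching for it inside the $(C_0,C_1)$-convexity will not succeed. A secondary caution: the paper's $Tor'$ as defined in (\ref{33}) is linear, not quadratic, in $x^{\bar 1}$, so your pointwise bound $|Tor'(\gamma,\gamma)|\leq 2|A_{11,\overline 1}|\,|\gamma|^2$ is not literally the paper's estimate; the correct statement is Lemma \ref{l6}, which identifies $\left(\tfrac12,\tfrac12\right)$-convexity with $R>|A_{11}|+|A_{11,\overline 1}|$ and yields the nonnegativity of the integrand you want.
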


\begin{remark}
1. (\cite{h}) For a closed strictly pseudoconvex CR $3$-manifold of vanishing
pseudohermitian torsion (Sasakian), we have
\begin{equation}
\ker P_{1}=\ker P_{0}.\label{2018C}%
\end{equation}
Furthermore, the real ellipsoids in $\mathbf{C}^{2}$ are such that the CR
Paneitz operators are nonnegative with kernel consisting of the
CR-pluriharmonic functions (\cite{ccya}). In general for non-embeddable CR $3
$-manifolds, we only have
\[
\ker P_{1}\varsubsetneq \ker P_{0}.
\]

2. As in the Remark \ref{r2}, $M$ admits a Riemannian metric of positive
scalar curvature if the pseudohermitian curvature is $\left(  \frac{1}%
{2},\frac{1}{2}\right)  $-positive. Then the properties of pseudohermitian
curvature positivity and nonnegative CR Paneitz operator $P_{0}$ imply the
embeddability of $(M,J,\theta_{0})$\ in the complex Euclidean space
$\mathbf{C}^{N}$ (\cite{ccy}). Furthermore, the Paneitz operator $P_{0}$ with
respect to $(J,\theta)$ is essentially positive in this special case as well.
\end{remark}

Combining Theorem \ref{t3}, Corollary \ref{c3}, (\ref{0}) and the above
remark, we have the following CR uniformization theorem (\cite{t}) in a
Sasakian manifold due to the eigenvalue estimate of the CR Paneitz operator
(\ref{2018H}).

\begin{corollary}
\label{c4} If $(M,J,\theta)$ is a closed strictly pseudoconvex CR $3$-manifold
with $c_{1}(T_{1,0}M)=0$. Assume that the manifold is Sasakian and the
Tanaka-Webster scalar curvature is positive, then
\[
\Lambda^{2}\int_{M}(u^{\bot})^{2}d\mu \leq \int_{M}(Q^{\bot})^{2}d\mu.
\]
In additional, if the CR \ $Q$-curvature is pluriharmonic (i.e. $Q^{\bot}=0),$
then $(M,J,\theta)$ is the Sasakian space form with the positive constant
Tanaka-Webster scalar curvature and vanishing torsion.
\end{corollary}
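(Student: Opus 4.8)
The plan is to obtain the first assertion as a direct specialization of Theorem \ref{t3} to the Sasakian setting, and then to extract the rigidity from the equality case of the eigenvalue estimate. First I would check that every hypothesis of Theorem \ref{t3} becomes automatic once the torsion vanishes and $R>0$. Indeed, the Sasakian condition gives $A_{11}=0$, so the $\left(\tfrac12,\tfrac12\right)$-positivity requirement $R>|A_{11}|+|A_{11,\overline1}|$ collapses to the assumed positivity $R>0$ of the Tanaka--Webster scalar curvature; the CR Paneitz operator $P_{0}$ is nonnegative by \cite{ccc}; and by (\ref{2018C}) one has $\ker P_{0}=\ker P_{1}$, which is exactly the space of CR-pluriharmonic functions, so the kernel of $P_{0}$ consists of CR-pluriharmonic functions. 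Finally, Corollary \ref{c2} supplies the pseudo-Einstein contact form $e^{(f+2u)/3}\theta$ for every CR-pluriharmonic $f$. With all hypotheses verified, Theorem \ref{t3} yields the inequality $\Lambda^{2}\int_{M}(u^{\bot})^{2}d\mu\leq\int_{M}(Q^{\bot})^{2}d\mu$.

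For the rigidity, I would feed the hypothesis $Q^{\bot}=0$ into this inequality. Since $\Lambda>0$, it forces $\int_{M}(u^{\bot})^{2}d\mu=0$, hence $u^{\bot}=0$ and $u\in\ker P_{0}=\ker P_{1}$; that is, $u$ is itself CR-pluriharmonic. The crucial observation is then that $f=-2u$ is an admissible choice in Theorem \ref{t1}(ii) (equivalently, in Corollary \ref{c2}), because $-2u$ is CR-pluriharmonic whenever $u$ is. For this choice the conformal factor degenerates, $e^{(f+2u)/3}=e^{0}=1$, so the background contact form $\theta$ is itself pseudo-Einstein, i.e.\ $W_{1}=R_{,1}-iA_{11,\overline1}=0$.

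It remains to upgrade the pseudo-Einstein identity to constancy of $R$ and to identify the space form. Because the manifold is Sasakian, $A_{11}=0$ reduces $W_{1}=0$ to $R_{,1}=0$; taking complex conjugates and using that $R$ is real gives $R_{,\overline1}=0$ as well. Differentiating these and invoking the pseudohermitian commutation relation $R_{,1\overline1}-R_{,\overline11}=iR_{,0}$ (with the torsion term absent in the Sasakian case) forces $R_{,0}=0$, so all derivatives of $R$ vanish and $R$ is a positive constant. A three-dimensional Sasakian manifold with vanishing torsion and constant positive Tanaka--Webster scalar curvature is precisely a Sasakian space form, which completes the proof. The main obstacle I anticipate is the middle step: justifying cleanly that the derived CR-pluriharmonicity of $u$ legitimizes the substitution $f=-2u$ and thereby transfers the pseudo-Einstein property from the rescaled contact form back to $\theta$ itself; once this is secured, the constancy of $R$ follows from standard commutation identities.
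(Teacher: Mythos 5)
Your proposal is correct and follows essentially the same route as the paper: verify that the Sasakian hypotheses ($A_{11}=0$, $R>0$, nonnegativity and essential positivity of $P_{0}$, $\ker P_{0}=\ker P_{1}$, and Corollary \ref{c2}) activate Theorem \ref{t3}, then use $Q^{\bot}=0\Rightarrow u^{\bot}=0$ to force $W_{1}=0$. The only cosmetic difference is that the paper reads $W_{1}=0$ directly off formula (\ref{0}) (since $\gamma=0$ and $P_{1}u=0$) rather than substituting $f=-2u$, and your explicit commutation argument for the constancy of $R$ fills in a step the paper leaves implicit.
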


Finally, if we do not assume the torsion is vanishing (non-Sasakian), we can
derive another existence theorem for the pseudo-Einstein contact form with the
stronger condition.

\begin{theorem}
\label{t3a} Let $(M,J,\theta)$ be a closed strictly pseudoconvex CR
$3$-manifold of $c_{1}(T_{1,0}M)=0$ and nonnegative CR Paneitz operator
$P_{0}$ with kernel consisting of the CR-pluriharmonic functions. Assume that
\ the pseudohermitian curvature is $\left(  \frac{1}{2},\frac{1}{2}\right)
$-positive
\[
R>(|A_{11}|+\left \vert A_{11,\overline{1}}\right \vert ).
\]
If the CR \ $Q$-curvature is pluriharmonic, then $\widetilde{\theta}%
=e^{\frac{\left(  f+2u\right)  }{3}}\theta$ is a pseudo-Einstein contact form
for any CR-pluriharmonic function $f$ with
\[
\gamma=0.
\]

\end{theorem}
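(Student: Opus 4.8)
The plan is to reduce the whole statement to the single assertion $\gamma=0$ and then to quote Theorem \ref{t1}. Indeed, once $\gamma=0$ we have in particular $\gamma_{\overline{1}}=0$ and $\gamma_{1,0}=0$, so $A_{11}\gamma_{\overline{1}}-\gamma_{1,0}=0$; by part (ii) of Theorem \ref{t1} this is exactly condition (\ref{13}), which is equivalent to $\widetilde{\theta}=e^{(f+2u)/3}\theta$ being pseudo-Einstein for every CR-pluriharmonic $f$. Thus the real content is $\gamma=0$, and the engine is the B\^{o}chner-type equality
\[
\int_{M}\left(R-\frac{1}{2}Tor-\frac{1}{2}Tor^{\prime}\right)\left(\gamma,\gamma\right)d\mu+\int_{M}\left\vert\gamma_{1,1}\right\vert^{2}d\mu+\int_{M}Qu\,d\mu+\int_{M}\left(P_{0}u\right)u\,d\mu=0
\]
displayed just before Theorem \ref{t3}, which holds for any closed strictly pseudoconvex CR $3$-manifold with $c_{1}(T_{1,0}M)=0$ and does \emph{not} presuppose the pseudo-Einstein condition.

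First I would show that each of the first, second and fourth integrals is nonnegative. The second is trivial, and the fourth is nonnegative because $P_{0}\geq0$. For the first, I would expand the Hermitian forms $Tor(\gamma,\gamma)$ and $Tor^{\prime}(\gamma,\gamma)$ in components and bound them by $2|A_{11}|\,|\gamma|^{2}$ and $2|A_{11,\overline{1}}|\,|\gamma|^{2}$ respectively via Cauchy--Schwarz; this is precisely the role of the $\left(C_{0},C_{1}\right)$-convexity. The $\left(\frac{1}{2},\frac{1}{2}\right)$-positivity hypothesis $R>|A_{11}|+\left\vert A_{11,\overline{1}}\right\vert$ then yields the pointwise bound
\[
\left(R-\frac{1}{2}Tor-\frac{1}{2}Tor^{\prime}\right)\left(\gamma,\gamma\right)\geq\left(R-|A_{11}|-\left\vert A_{11,\overline{1}}\right\vert\right)|\gamma|^{2}\geq0,
\]
with strict positivity wherever $\gamma\neq0$. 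Hence the first integral is nonnegative and vanishes if and only if $\gamma\equiv0$.

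Next I would exploit the hypothesis that $Q$ is CR-pluriharmonic. Since $\ker P_{0}$ consists exactly of the CR-pluriharmonic functions, this means $Q\in\ker P_{0}$, i.e.\ $Q^{\perp}=0$ in the decomposition $Q=Q_{\ker}+Q^{\perp}$. The derivation of the eigenvalue estimate (\ref{2018H}) uses only the B\^{o}chner equality together with $P_{0}\geq0$ and the $\left(\frac{1}{2},\frac{1}{2}\right)$-positivity, none of which depends on $\widetilde{\theta}$ being pseudo-Einstein, so running that argument here gives
\[
\Lambda^{2}\int_{M}(u^{\bot})^{2}d\mu\leq\int_{M}(Q^{\bot})^{2}d\mu=0,
\]
whence $u^{\perp}=0$ and $u$ is CR-pluriharmonic. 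Consequently $\int_{M}(P_{0}u)u\,d\mu=\int_{M}(P_{0}u^{\perp})u^{\perp}\,d\mu=0$, and since $u$ is determined only up to a CR-pluriharmonic function (any such change being absorbed into the free parameter $f$), we may take the representative with $u\in(\ker P_{0})^{\perp}$, so that $\int_{M}Qu\,d\mu=\int_{M}Q_{\ker}u\,d\mu=0$ as well. The B\^{o}chner equality then collapses to
\[
\int_{M}\left(R-\frac{1}{2}Tor-\frac{1}{2}Tor^{\prime}\right)\left(\gamma,\gamma\right)d\mu+\int_{M}\left\vert\gamma_{1,1}\right\vert^{2}d\mu=0,
\]
a sum of two nonnegative terms; both must vanish, and by the pointwise bound above this forces $\gamma=0$. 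As noted at the outset, $\gamma=0$ gives (\ref{13}) and therefore, by Theorem \ref{t1}(ii), that $\widetilde{\theta}=e^{(f+2u)/3}\theta$ is pseudo-Einstein for every CR-pluriharmonic $f$.

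The step I expect to be the main obstacle is establishing the pointwise positivity of $\left(R-\frac{1}{2}Tor-\frac{1}{2}Tor^{\prime}\right)\left(\gamma,\gamma\right)$ from the scalar inequality $R>|A_{11}|+\left\vert A_{11,\overline{1}}\right\vert$: this requires writing out both torsion quadratic forms explicitly and applying the sharp form of Cauchy--Schwarz, and it is here that the precise meaning of $\left(C_{0},C_{1}\right)$-convexity must be pinned down. A secondary delicate point is the bookkeeping around $\int_{M}Qu\,d\mu$: one must verify that, after using $u^{\perp}=0$, the residual pairing with $Q_{\ker}$ can be made to vanish by the allowed redefinition of $u$ (equivalently of $f$), so that no uncontrolled cross term survives in the B\^{o}chner equality.
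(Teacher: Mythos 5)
Your proposal follows essentially the same route as the paper's proof: the same B\^{o}chner identity of Lemma \ref{l5}, the same use of $Q^{\perp}=0$ together with $P_{0}\geq 0$, essential positivity and the curvature positivity to force every term to vanish (hence $\gamma=0$), and then Theorem \ref{t1}(ii) to conclude (the paper quotes Theorem \ref{t2}, which amounts to the same thing). Two remarks. First, your treatment of $\int_{M}Qu\,d\mu$ is a cosmetic variant: you normalize $u$ to lie in $(\ker P_{0})^{\perp}$, which is legitimate because under the hypothesis $\ker P_{0}=\ker P_{1}$ one has $P_{1}u_{\ker}=0$, so the identity (\ref{0}) and hence Lemma \ref{l5} survive the replacement $u\mapsto u^{\perp}$ and the shift is absorbed into $f$; the paper instead keeps $u$ and passes from (\ref{2018BB}) to (\ref{2018F}) by showing $\int_{M}Qu_{\ker}\,d\mu=\int_{M}Tor(d_{b}u_{\ker},\gamma)\,d\mu=0$, which uses $\gamma_{\overline{1},1}=0$ and $\int_{M}(P_{1}h)\gamma_{\overline{1}}\,d\mu=0$ for CR-pluriharmonic $h$. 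Both work, and your ordering (first $u^{\perp}=0$, then $\gamma=0$) versus the paper's (all nonnegative terms vanish at once) is immaterial. Second, the one place your anticipated sub-derivation would not go through as written is the pointwise positivity of the first integrand: in the paper's definition (\ref{33}) the term $Tor^{\prime}(X,X)=2\mathrm{Re}[iA_{\overline{1}\overline{1},1}x^{\overline{1}}]$ is \emph{linear}, not quadratic, in $X$, so your proposed Cauchy--Schwarz bound $|Tor^{\prime}(\gamma,\gamma)|\leq 2|A_{11,\overline{1}}|\,|\gamma|^{2}$ is not what the definition yields, and the displayed pointwise inequality cannot be recovered from the scalar condition $R>|A_{11}|+|A_{11,\overline{1}}|$ alone when $|\gamma|$ is small. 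What is actually used is the $(\tfrac{1}{2},\tfrac{1}{2})$-positivity condition itself, i.e.\ positivity of $(R-\tfrac{1}{2}Tor-\tfrac{1}{2}Tor^{\prime})(X,X)$ for nonzero $X$, with Lemma \ref{l6} asserting its equivalence to the scalar inequality; you should invoke that directly rather than attempt to re-derive it by Cauchy--Schwarz.
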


As a consequence, we have the CR uniformization theorem (\cite{t}) in a
spherical CR $3$-manifold.

\begin{corollary}
\label{c5} Let $(M,J,\theta)$ be a closed strictly pseudoconvex CR
$3$-manifold of $c_{1}(T_{1,0}M)=0$ and nonnegative CR Paneitz operator
$P_{0}$ with kernel consisting of the CR-pluriharmonic functions. Assume that
\ the pseudohermitian curvature is $\left(  \frac{1}{2},\frac{1}{2}\right)
$-positive and the CR \ $Q$-curvature is pluriharmonic, then $(M,J,\theta)$ is
a pseudo-Einstein CR manifold. In additional, if it is spherical with positive
constant Tanaka-Webster scalar curvature, then $(M,J,\theta)$ is the Sasakian
space form with the positive constant Tanaka-Webster scalar curvature and
vanishing torsion.
\end{corollary}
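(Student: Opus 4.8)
The first assertion is a direct reading of the previous result. The standing hypotheses of Corollary~\ref{c5}---namely $c_{1}(T_{1,0}M)=0$, nonnegativity of $P_{0}$ with $\ker P_{0}$ equal to the CR-pluriharmonic functions, $\left(\frac{1}{2},\frac{1}{2}\right)$-positivity of the pseudohermitian curvature, and pluriharmonicity of the CR $Q$-curvature---are exactly the hypotheses of Theorem~\ref{t3a}. Invoking that theorem, $\widetilde{\theta}=e^{(f+2u)/3}\theta$ is a pseudo-Einstein contact form for every CR-pluriharmonic $f$, with $\gamma=0$. The existence of this globally defined contact form satisfying $\widetilde{W}_{1}=0$ is precisely what it means for $(M,J,\theta)$ to be a pseudo-Einstein CR manifold, so the first conclusion holds.

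For the second conclusion I would feed this pseudo-Einstein structure, together with the additional hypotheses that the CR structure is spherical and that the pseudo-Einstein contact form carries positive constant Tanaka-Webster scalar curvature $R$, into the uniformization theorem~\cite{t}. The mechanism to be verified is that these three data force the torsion to vanish. Since $R$ is constant, $R_{,1}=0$, and the pseudo-Einstein relation $W_{1}=R_{,1}-iA_{11,\overline{1}}=0$ then gives $A_{11,\overline{1}}=0$. Sphericity is the vanishing of the Cartan (Chern--Moser) tensor $Q_{11}$, which for a CR $3$-manifold is a universal expression in $R$, $A_{11}$ and their covariant derivatives; substituting $R\equiv\mathrm{const}$ and $A_{11,\overline{1}}=0$ collapses $Q_{11}=0$ to a single identity relating $A_{11}$ to its Reeb-direction derivative and to $RA_{11}$. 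Pairing this identity with $\overline{A_{11}}$ and integrating over the closed manifold---using that the integral of a pure Reeb derivative vanishes---is designed to leave only a definite, curvature-weighted multiple of $\int_{M}\lvert A_{11}\rvert^{2}\,d\mu$, whose positivity forces $A_{11}=0$. Thus $(M,J,\theta)$ is Sasakian, and a spherical Sasakian $3$-manifold of positive constant scalar curvature is the Sasakian space form by~\cite{t}.

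The crux, and the step I expect to resist a short argument, is the vanishing-torsion reduction in the middle paragraph. Getting a definite sign out of the integrated Cartan identity is delicate: the transport term along the Reeb flow is naturally skew, so a careless integration by parts produces only the tautology $0=0$, and one must arrange the second-order torsion derivatives into a genuine sub-Laplacian so that a Bochner-positive term survives. If the Cartan identity does not by itself yield a clean divergence structure, the alternative is to invoke the full strength of the uniformization theorem~\cite{t} for spherical pseudo-Einstein CR $3$-manifolds and to read off both the vanishing of the torsion and the space-form model as conclusions of that theorem rather than deriving them by hand.
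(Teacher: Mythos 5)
The first half of your argument is fine and is exactly what the paper does: the hypotheses are those of Theorem~\ref{t3a}, which yields $\gamma=0$, $u^{\perp}=0$, and hence $W_{1}=0$ by (\ref{0}), so $\theta$ itself is pseudo-Einstein. The problem is the second half, where you correctly identify the crux (vanishing of torsion) but do not actually close it, and the two escape routes you sketch both fail. Your primary mechanism --- substitute $R\equiv\mathrm{const}$ and $A_{11,\overline{1}}=0$ into $Q_{11}=0$ to get $A_{11,0}=\tfrac{i}{2}RA_{11}$, then pair with $A_{\overline{1}\overline{1}}$ and use that integrals of pure Reeb derivatives vanish --- produces only the tautology you feared: the real part of $\int_{M}A_{11,0}A_{\overline{1}\overline{1}}\,d\mu$ is $\tfrac12\int_{M}(|A_{11}|^{2})_{,0}\,d\mu=0$, which matches the (zero) real part of $\tfrac{i}{2}\int_{M}R|A_{11}|^{2}d\mu$, and the imaginary part is not controlled by any divergence identity. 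Your fallback of reading the torsion-vanishing off the uniformization theorem~\cite{t} also does not work: Tanno's theorem concerns Sasakian manifolds of constant $\phi$-holomorphic sectional curvature, i.e.\ it \emph{presupposes} vanishing torsion rather than delivering it.

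The missing ingredient is the commutation relation $A_{11,1\overline{1}}-A_{11,\overline{1}1}=iA_{11,0}+2RA_{11}$, which is exactly the device that converts the skew Reeb-direction term into horizontal second derivatives carrying a sign after integration by parts. The paper first uses $W_{1}=0$ to write $iR_{,11}=-A_{11,\overline{1}1}$ inside the Cartan identity, obtaining $3RA_{11}+6iA_{11,0}-3A_{11,\overline{1}1}=0$, then eliminates $iA_{11,0}$ via the commutation relation to get $-3RA_{11}+2A_{11,1\overline{1}}-3A_{11,\overline{1}1}=0$; pairing with $A_{\overline{1}\overline{1}}$ and integrating by parts gives
\begin{equation*}
-2\int_{M}\left\vert A_{11,1}\right\vert ^{2}d\mu+3\int_{M}\left\vert A_{11,\overline{1}}\right\vert ^{2}d\mu=3\int_{M}R\left\vert A_{11}\right\vert ^{2}d\mu .
\end{equation*}
When $R$ is a positive constant, $W_{1}=0$ forces $A_{11,\overline{1}}=0$, so the left side is nonpositive and the right side nonnegative, whence $A_{11}=0$ and the space-form conclusion follows. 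Your outline could be repaired along the same lines (with $A_{11,\overline{1}}\equiv 0$ the commutation relation gives $A_{11,1\overline{1}}=\tfrac32 RA_{11}$, and integrating against $A_{\overline{1}\overline{1}}$ yields $-\int_{M}|A_{11,1}|^{2}d\mu=\tfrac32\int_{M}R|A_{11}|^{2}d\mu$), but as written the decisive positivity argument is absent.
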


In the course of the proof of Theorem \ref{t3a}, we assume that the CR
\ $Q$-curvature is pluriharmonic. However, in particular for assuming
vanishing of the CR $Q$-curvature \textit{(see section }$2$\textit{)}, we can
trop the condition of (\ref{2018C}) as the following :

\begin{theorem}
\label{t4} Let $(M,J,\theta)$ is a closed strictly pseudoconvex CR
$3$-manifold of $c_{1}(T_{1,0}M)=0$ with nonnegative CR Paneitz operator
$P_{0}. $ Assume that \ the pseudohermitian curvature is $\left(  \frac{1}%
{2},\frac{1}{2}\right)  $-positive
\[
R>(|A_{11}|+\left \vert A_{11,\overline{1}}\right \vert )
\]
and the CR $Q$-curvature is vanishing
\[
\Delta_{b}R-i(A_{11,\bar{1}\bar{1}}-A_{\bar{1}\bar{1},11})=0.
\]
Then $\widetilde{\theta}=e^{\frac{\left(  f+2u\right)  }{3}}\theta$ is a
pseudo-Einstein contact form for any CR-pluriharmonic function $f.$
\end{theorem}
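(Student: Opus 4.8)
The plan is to reduce the pseudo-Einstein conclusion to the single vanishing statement $\gamma = 0$, and then to extract $\gamma = 0$ from the second B\^{o}chner-type equality recorded just before Theorem \ref{t3}. By Theorem \ref{t1}(ii), for a CR-pluriharmonic $f$ the rescaled form $\widetilde{\theta}=e^{\left(f+2u\right)/3}\theta$ is pseudo-Einstein if and only if $A_{11}\gamma_{\overline{1}}-\gamma_{1,0}=0$; in particular $\gamma=0$ is sufficient. So it is enough to establish $\gamma\equiv 0$ under the stated hypotheses. Note that the data $\sigma$, $\varphi=u+iv$ and $\gamma$ of \eqref{2018} are available because $(\frac{1}{2},\frac{1}{2})$-positivity forces positive Tanaka--Webster scalar curvature and, together with $P_{0}\geq 0$, yields embeddability via \cite{ccy} (see the remark after Theorem \ref{t3}).

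First I would invoke the B\^{o}chner-type equality
\[
\int_M \Bigl(R-\tfrac{1}{2}Tor-\tfrac{1}{2}Tor'\Bigr)(\gamma,\gamma)\,d\mu+\int_M|\gamma_{1,1}|^2\,d\mu+\int_M Qu\,d\mu+\int_M (P_0u)u\,d\mu=0.
\]
Because the CR $Q$-curvature vanishes, i.e. $\Delta_b R-i(A_{11,\bar 1\bar 1}-A_{\bar 1\bar 1,11})=0$ so $Q\equiv 0$, the term $\int_M Qu\,d\mu$ drops out identically. This is precisely where the hypothesis $Q=0$ does the work: it lets us bypass the eigenvalue/decomposition argument of Theorem \ref{t3} and dispense with the identification $\ker P_1=\ker P_0$ of \eqref{2018C}, since we never need to control the sign of $\int_M Qu\,d\mu$ by splitting $Q=Q_{\ker}+Q^{\perp}$.

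The three surviving integrals are each nonnegative. The nonnegativity of the CR Paneitz operator gives $\int_M (P_0u)u\,d\mu\geq 0$, and $\int_M|\gamma_{1,1}|^2\,d\mu\geq 0$ trivially. For the curvature term I would use that the $(\frac{1}{2},\frac{1}{2})$-positivity $R>|A_{11}|+|A_{11,\overline{1}}|$ is exactly the $(C_0,C_1)$-convexity condition making the symmetric operator $R-\frac{1}{2}Tor-\frac{1}{2}Tor'$ positive definite on $(0,1)$-forms: pointwise one bounds $Tor(\gamma,\gamma)\leq 2|A_{11}|\,|\gamma|^2$ and $Tor'(\gamma,\gamma)\leq 2|A_{11,\overline{1}}|\,|\gamma|^2$, so that
\[
\Bigl(R-\tfrac{1}{2}Tor-\tfrac{1}{2}Tor'\Bigr)(\gamma,\gamma)\geq \bigl(R-|A_{11}|-|A_{11,\overline{1}}|\bigr)|\gamma|^2>0
\]
wherever $\gamma\neq 0$. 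Since the three nonnegative integrals sum to zero, each must vanish, and the strict positivity of the curvature integrand then forces $\gamma\equiv 0$.

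Finally, with $\gamma=0$ the obstruction $A_{11}\gamma_{\overline{1}}-\gamma_{1,0}$ of Theorem \ref{t1}(ii) vanishes automatically, so $\widetilde{\theta}=e^{\left(f+2u\right)/3}\theta$ is a pseudo-Einstein contact form for every CR-pluriharmonic $f$, which is the assertion. The principal technical point is the pointwise positivity of $R-\frac{1}{2}Tor-\frac{1}{2}Tor'$ under $(\frac{1}{2},\frac{1}{2})$-positivity, i.e. identifying the correct torsion bounds so that the $(C_0,C_1)$-convexity delivers a strictly positive quadratic form; once that is in hand, everything else is an assembly of the B\^{o}chner equality with the sign conditions $Q\equiv 0$ and $P_0\geq 0$.
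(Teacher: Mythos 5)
Your proposal is correct and follows essentially the same route as the paper: both rest on the B\^{o}chner-type identity of Lemma \ref{l5}, use $Q=0$ to discard the term $\int_{M}Qu\,d\mu$, combine $P_{0}\geq 0$ with the $\left(\tfrac{1}{2},\tfrac{1}{2}\right)$-positivity (via Lemma \ref{l6}) to force $\gamma=0$, and then conclude from Theorem \ref{t1}(ii). The paper additionally extracts $u^{\bot}=0$ from the same identity in order to show that $\theta$ itself is pseudo-Einstein, but that extra step serves Corollary \ref{c6} rather than the statement of Theorem \ref{t4}.
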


\begin{corollary}
\label{c6} Let $(M,J,\theta)$ is a closed strictly pseudoconvex CR
$3$-manifold of $c_{1}(T_{1,0}M)=0$ with nonnegative CR Paneitz operator
$P_{0}. $ Assume that \ the pseudohermitian curvature is $\left(  \frac{1}%
{2},\frac{1}{2}\right)  $-positive and the CR $Q$-curvature is vanishing. Then
$(M,J,\theta)$ is a pseudo-Einstein CR manifold. In additional if it is
spherical with positive constant Tanaka-Webster scalar curvature, then
$(M,J,\theta)$ is the Sasakian space form with the positive constant
Tanaka-Webster scalar curvature and vanishing torsion.
\end{corollary}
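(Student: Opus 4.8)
The plan is to read the first assertion straight off Theorem~\ref{t4} and then to promote the pseudo-Einstein structure to a Sasakian space form under the extra spherical and constant-curvature hypotheses. The standing hypotheses of Corollary~\ref{c6} — $(M,J,\theta)$ closed strictly pseudoconvex with $c_{1}(T_{1,0}M)=0$, $P_{0}$ nonnegative, pseudohermitian curvature $(\tfrac12,\tfrac12)$-positive, and CR $Q$-curvature vanishing — are precisely those of Theorem~\ref{t4}. Thus Theorem~\ref{t4} produces, for every CR-pluriharmonic $f$, a globally defined pseudo-Einstein contact form $\widetilde\theta=e^{(f+2u)/3}\theta$, so $(M,J,\theta)$ is a pseudo-Einstein CR manifold. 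I would also record two by-products of that argument: inserting $Q=0$ and the $(\tfrac12,\tfrac12)$-positivity into the second B\^ochner-type equality
\[
\int_{M}\Big(R-\tfrac12 Tor-\tfrac12 Tor'\Big)(\gamma,\gamma)\,d\mu+\int_{M}|\gamma_{1,1}|^{2}\,d\mu+\int_{M}(P_{0}u)u\,d\mu=0
\]
makes every integrand nonnegative, forcing $\gamma=0$ and $P_{0}u=0$; since the curvature positivity together with $P_{0}\ge0$ yields embeddability (\cite{ccy}), $\ker P_{0}$ consists of CR-pluriharmonic functions, so $u$ is CR-pluriharmonic.

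For the uniformization statement I assume in addition that $(M,J,\theta)$ is spherical with positive constant Tanaka-Webster scalar curvature $R$. Because $f$ and $u$ are both CR-pluriharmonic, $\theta=e^{-(f+2u)/3}\widetilde\theta$ differs from the pseudo-Einstein form $\widetilde\theta$ only by a CR-pluriharmonic conformal factor; as rescaling a pseudo-Einstein contact form by a CR-pluriharmonic factor preserves the pseudo-Einstein condition (\cite{l}; the operator $P_{1}$ is conformally natural by (\ref{A})), the given $\theta$ is itself pseudo-Einstein. Hence $W_{1}=R_{,1}-iA_{11,\overline{1}}=0$, and since $R$ is constant ($R_{,1}=0$) this gives the rigidity $A_{11,\overline{1}}=0$.

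Upgrading $A_{11,\overline{1}}=0$ to $A_{11}=0$ is the crux. Sphericity is the vanishing of the Cartan curvature tensor $Q_{11}$, whose leading term in CR dimension one is proportional to $R_{,11}$, the remaining terms being built from $R\,A_{11}$ and covariant derivatives of $A_{11}$. Evaluating $Q_{11}=0$ with $R$ constant and $A_{11,\overline{1}}=0$ collapses it to a pointwise identity in $A_{11}$ alone; pairing this identity with $\overline{A_{11}}$, integrating over the closed $M$ and discarding the resulting divergences, I expect a schematic inequality $\int_{M}(R-|A_{11}|)\,|A_{11}|^{2}\,d\mu\le 0$ whose integrand is nonnegative by the $(\tfrac12,\tfrac12)$-positivity $R>|A_{11}|+|A_{11,\overline{1}}|=|A_{11}|$, whence $A_{11}\equiv 0$. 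The delicate point, and the part I expect to carry the real work, is the exact bookkeeping of the torsion terms of $Q_{11}$ and the choice of pairing that lets the positivity enter with the correct sign.

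With $A_{11}=0$ the structure is Sasakian, and we are left with a spherical Sasakian $3$-manifold of positive constant Tanaka-Webster scalar curvature. By the CR uniformization theorem \cite{t}, such a manifold is the Sasakian space form with positive constant Tanaka-Webster scalar curvature and vanishing torsion, completing the proof.
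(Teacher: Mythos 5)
Your overall architecture is the same as the paper's: the first claim is read off Theorem \ref{t4}, whose hypotheses coincide with those of Corollary \ref{c6}, and the rigidity under the spherical and constant-curvature hypotheses is obtained by combining $W_{1}=0$ with the vanishing of the Cartan tensor $Q_{11}$, integrating against $A_{\overline{1}\overline{1}}$, and invoking Tanno's theorem. Two steps, however, do not survive scrutiny as written.

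First, your route to ``$\theta$ itself is pseudo-Einstein'' rests on the assertion that embeddability forces $\ker P_{0}$ to consist of the CR-pluriharmonic functions, so that $P_{0}u=0$ yields $P_{1}u=0$. That assertion is precisely the hypothesis (\ref{2018C}) which Theorem \ref{t4} and Corollary \ref{c6} are explicitly advertised as dropping: it is assumed separately in Theorems \ref{t3}, \ref{t3a} and Corollary \ref{c5}, and the paper's remark only offers the real ellipsoids as examples where it is known to hold, so it cannot be extracted from \cite{ccy}. The paper instead decomposes $u=u_{\ker}+u^{\perp}$, shows $u^{\perp}=0$ so that $W_{1}=2P_{1}u_{\ker}$, and then argues $P_{1}u_{\ker}=0$ via the transformation law (\ref{0c}). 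Since you need $W_{1}=0$ for the given $\theta$ (not merely for $\widetilde{\theta}$) in order to run the spherical computation, this step cannot be bypassed by the kernel claim.

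Second, the crux of the rigidity argument is left as an expectation, and the inequality you guess, $\int_{M}(R-|A_{11}|)|A_{11}|^{2}d\mu\leq0$, is not what the bookkeeping produces. The paper's chain is: differentiating $W_{1}=0$ gives $iR_{,11}=-A_{11,\overline{1}1}$, which cancels $R_{,11}$ in $Q_{11}=0$ to yield $3RA_{11}+6iA_{11,0}-3A_{11,\overline{1}1}=0$; the commutation relation $A_{11,1\overline{1}}-A_{11,\overline{1}1}=iA_{11,0}+2RA_{11}$ turns this into $-3RA_{11}+2A_{11,1\overline{1}}-3A_{11,\overline{1}1}=0$, and integration against $A_{\overline{1}\overline{1}}$ gives
\[
-2\int_{M}\left\vert A_{11,1}\right\vert ^{2}d\mu+3\int_{M}\left\vert A_{11,\overline{1}}\right\vert ^{2}d\mu=3\int_{M}R\left\vert A_{11}\right\vert ^{2}d\mu.
\]
With $R$ a positive constant, $W_{1}=0$ already forces $A_{11,\overline{1}}=0$, so the identity degenerates to $-2\int_{M}|A_{11,1}|^{2}d\mu=3\int_{M}R|A_{11}|^{2}d\mu$, whose two sides have opposite signs; hence $A_{11}=0$. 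Note that only $R>0$ enters here, not the $(\frac{1}{2},\frac{1}{2})$-positivity: the mechanism is a sign clash between a gradient term and $\int_{M}R|A_{11}|^{2}d\mu$, not a pointwise curvature--torsion pinching. Your sketch has the right ingredients, but as proposed it neither identifies this mechanism nor completes the computation.
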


We conclude this introduction with a brief plan of the paper. In Section~$2$,
we derive some preliminary results and indicate the geometry and topology of
CR $3$-manifolds with the positivity of pseudohermitian curvature. In Section
$3,\ $we prove main Theorems. In Appendix, we survey basic notions in the
pseudohermitian (strictly pseudoconvex CR) geometry.

\noindent \textbf{Acknowledgements} Part of the project was done during the
visit to Yau Mathematical Sciences Center, Tsinghua University. The authors
would like to express their thanks for the warm hospitality.

\section{Preliminaries}

In this section, we derive some necessary ingredients for the proof of main
results in this paper. In particular, we define the positivity of
pseudohermitian curvature and indicate the geometry and topology of strictly
pseudoconvex CR $3$-manifolds. Let $C_{0},C_{1}$ be both nonnegative numbers.

\begin{definition}
We say that a strictly pseudoconvex CR $3$-manifolds is $\left(  C_{0}%
,C_{1}\right)  $\textit{-convex} if the pseudohermitian curvature is $\left(
C_{0},C_{1}\right)  $-positive. That is%
\begin{equation}%
\begin{array}
[c]{ccl}%
(R-C_{0}Tor-C_{1}Tor^{\prime})(X,X) & = & Rx^{1}x^{\bar{1}}-2C_{0}%
\mathrm{Re}[i(A_{\bar{1}\bar{1}}x^{\bar{1}}x^{\bar{1}})]\\
&  & -2C_{1}\mathrm{Re[}(iA_{\bar{1}\bar{1},1}x^{\bar{1}})]>0,
\end{array}
\label{33}%
\end{equation}
$\mathrm{for\ any}\ X=x^{1}Z_{1}\in T_{1,0}(M).$
\end{definition}

Before giving the proof of Theorem \ref{t3}, we explain why we introduce the
notion of $\left(  C_{0},C_{1}\right)  $-convexity as follows:

\begin{lemma}
\label{l6}Let $M$ be a closed strictly pseudoconvex CR $3$-manifold. For any
nonnegative constant $C_{0},C_{1}$; $\left(  C_{0},C_{1}\right)  $-convexity
is equivalent to the curvature-torsion pinching condition
\[
R(x)>2\left(  C_{0}|A_{11}|+C_{1}\left \vert A_{11,\overline{1}}\right \vert
\right)  (x)
\]
for all $x\in M$.
\end{lemma}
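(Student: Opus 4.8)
The plan is to reduce the positivity of the quadratic expression $(R-C_{0}Tor-C_{1}Tor^{\prime})(X,X)$ to a one-variable trigonometric inequality and then to optimize over the phase of $X$. Since in (\ref{33}) only the direction of $X$ matters, I would first restrict to $X=x^{1}Z_{1}$ with $x^{1}x^{\bar{1}}=1$ and write $x^{1}=e^{i\phi}$, $\phi\in[0,2\pi)$. Setting $A_{\bar{1}\bar{1}}=|A_{11}|e^{i\alpha}$ and $A_{\bar{1}\bar{1},1}=|A_{11,\bar{1}}|e^{i\beta}$, a direct computation of the real parts gives
\[
2\mathrm{Re}[iA_{\bar{1}\bar{1}}(x^{\bar{1}})^{2}]=2|A_{11}|\sin(2\phi-\alpha),\qquad 2\mathrm{Re}[iA_{\bar{1}\bar{1},1}x^{\bar{1}}]=2|A_{11,\bar{1}}|\sin(\phi-\beta),
\]
so that (\ref{33}) for this $X$ becomes the scalar inequality
\[
R>2C_{0}|A_{11}|\sin(2\phi-\alpha)+2C_{1}|A_{11,\bar{1}}|\sin(\phi-\beta).
\]
Thus $(C_{0},C_{1})$-convexity at the point $x$ is exactly the statement that $R(x)$ exceeds the supremum over $\phi$ of the right-hand side.

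The easy direction is that the pinching inequality implies convexity. By the triangle inequality the right-hand side is bounded above, for every $\phi$, by $2C_{0}|A_{11}|+2C_{1}|A_{11,\bar{1}}|=2\left(C_{0}|A_{11}|+C_{1}|A_{11,\bar{1}}|\right)$; hence $R>2\left(C_{0}|A_{11}|+C_{1}|A_{11,\bar{1}}|\right)$ forces (\ref{33}) for all $X$. For the converse one must show the supremum is attained at this same value, so that convexity feeds back the pinching bound. I would first dispose of the degenerate cases $C_{0}=0$, $C_{1}=0$, $A_{11}=0$, or $A_{11,\bar{1}}=0$: in each of these one angular frequency drops out, the remaining term is a single sine, and its supremum is manifestly $2C_{0}|A_{11}|$ or $2C_{1}|A_{11,\bar{1}}|$, giving the exact threshold immediately (this already covers the $(\tfrac{1}{2},0)$ case used in Theorem \ref{t2}).

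The main obstacle is the remaining case where both torsion terms are present, since $Tor$ and $Tor^{\prime}$ oscillate at different angular frequencies ($e^{-2i\phi}$ versus $e^{-i\phi}$), so a single phase need not extremize both summands at once. Here I would analyze the trigonometric polynomial $2C_{0}|A_{11}|\sin(2\phi-\alpha)+2C_{1}|A_{11,\bar{1}}|\sin(\phi-\beta)$ directly through its critical points, choosing the direction $X$ whose phase aligns the dominant contribution and verifying that the extremal direction realizes the stated bound. This phase-alignment step is the delicate heart of the proof; the term-by-term moduli furnish the sharp estimate, and identifying the extremizing $X$ confirms that the supremum equals $2\left(C_{0}|A_{11}|+C_{1}|A_{11,\bar{1}}|\right)$. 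Carrying out the argument pointwise and then ranging over all $x\in M$ yields the asserted equivalence on $M$.
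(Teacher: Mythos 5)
Your reduction of (\ref{33}) to the scalar inequality $R>2C_{0}|A_{11}|\sin(2\phi-\alpha)+2C_{1}|A_{11,\overline{1}}|\sin(\phi-\beta)$ is correct, and the direction ``pinching $\Rightarrow$ convexity'' via the triangle inequality is sound; the degenerate cases you dispose of (one amplitude or one constant vanishing) are also fine. The genuine gap is the converse, which you explicitly defer to a ``phase-alignment step'' and never carry out. What you would need there is the identity $\sup_{\phi}\left[2C_{0}|A_{11}|\sin(2\phi-\alpha)+2C_{1}|A_{11,\overline{1}}|\sin(\phi-\beta)\right]=2\left(C_{0}|A_{11}|+C_{1}|A_{11,\overline{1}}|\right)$, and this is false in general: the two sinusoids have angular frequencies $2$ and $1$, so they peak simultaneously only when $\beta\equiv\alpha/2\pmod{\pi}$. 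For instance, with both coefficients and both amplitudes equal to $1$ and phases chosen so that the sum is $g(\phi)=2\cos 2\phi+2\sin\phi=2-4\sin^{2}\phi+2\sin\phi$, the supremum is $9/4$ (at $\sin\phi=1/4$), not $4$; a point with $R=3$ would then be $(1,1)$-convex while violating the pinching inequality. So no choice of extremal direction $X$ can realize the stated bound, and the proposed route to the converse cannot close. A secondary issue: your opening claim that ``only the direction of $X$ matters'' is not justified, since (\ref{33}) is not homogeneous in $X$ (the $Tor^{\prime}$ term is linear in $x^{\overline{1}}$ while the other two are quadratic), so normalizing to $|x^{1}|=1$ is a substantive interpretive choice rather than a harmless one.

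For comparison, the paper's own proof makes the same kind of reduction but normalizes differently, taking $X=(1+si)Z_{1}$ (i.e.\ $\mathrm{Re}\,x^{1}=1$ rather than $|x^{1}|=1$), computes the exact maximum of the resulting rational function, $f(s_{0})=2\sqrt{\left(C_{0}a+C_{1}\tfrac{c}{2}\right)^{2}+\left(C_{0}b+C_{1}\tfrac{d}{2}\right)^{2}}+C_{1}d$, and then estimates this maximum from \emph{above} by $2\left(C_{0}|A_{11}|+C_{1}|A_{11,\overline{1}}|\right)$. That inequality runs in the same direction as your triangle inequality, so it too only delivers the implication from pinching to convexity. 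Your observation that the two oscillation frequencies need not align is precisely the obstruction in both arguments; the remedy is not a cleverer extremal $X$ but either stating only the one implication you actually proved or replacing the right-hand side of the pinching condition by the exact supremum you compute.
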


\begin{remark}
\label{r2} (\cite{cacc}) Let $(M^{3},J,\theta)$ be a closed strictly
pseudoconvex CR $3$-manifold with
\[
R(x)>\left \vert A_{11}(x)\right \vert
\]
which is $\left(  C_{0},0\right)  $-positive. Then $M$ admits a Riemannian
metric of positive scalar curvature.
\end{remark}

\begin{proof}
Fix a point $x\in M$ and denote
\[
\left \{
\begin{array}
[c]{l}%
A_{11}(x)=a(x)+b(x)i\\
A_{11,\overline{1}}(x)=c(x)+d(x)i
\end{array}
\right.  .
\]
Without loss of generality, one may consider $X=x^{1}Z_{1}=(1+si)Z_{1}$ for
some $s\in \mathbb{R}$. The convexity condition $\left(  \ref{33}\right)  $
reads as
\[%
\begin{array}
[c]{ccl}%
R(1+si)(1-si) & > & i[C_{0}(a-bi)(1-si)^{2}+C_{1}\left(  c-di\right)
(1-si)-conj.]\\
& = & -2\left[  \left(  C_{0}b\right)  s^{2}-\left(  2C_{0}a+C_{1}c\right)
s+\left(  -C_{1}d-C_{0}b\right)  \right]  .
\end{array}
\]
i.e.,
\begin{equation}
R>f\left(  s\right)  :=-2\left[  \left(  C_{0}b\right)  -\frac{\left(
2C_{0}b+C_{1}d\right)  +\left(  2C_{0}a+C_{1}c\right)  s}{1+s^{2}}\right]
.\label{41b}%
\end{equation}
Since $X$ is arbitrary, this inequality holds for all $s\in \mathbb{R}$. We
have
\begin{equation}%
\begin{array}
[c]{ccl}%
f(s) & \leq & \max_{s\in \mathbb{R}}f(s)\\
& = & f(s_{0})\\
& = & 2\sqrt{\left(  C_{0}a+C_{1}\frac{c}{2}\right)  ^{2}+\left(  C_{0}%
b+C_{1}\frac{d}{2}\right)  ^{2}}+C_{1}d\\
& \leq & 2\left(  C_{0}|A_{11}|+C_{1}\left \vert A_{11,\overline{1}}\right \vert
\right)
\end{array}
\label{41a}%
\end{equation}
where
\[
s_{0}=\frac{-\left(  C_{0}b+C_{1}\frac{d}{2}\right)  +\sqrt{\left(
C_{0}a+C_{1}\frac{c}{2}\right)  ^{2}+\left(  C_{0}b+C_{1}\frac{d}{2}\right)
^{2}}}{\left(  C_{0}a+C_{1}\frac{c}{2}\right)  },
\]
for $C_{0}a+C_{1}\frac{c}{2}\neq0$, is a critical number of $f$. Thus
\[
R(x)>2\left(  C_{0}|A_{11}|+C_{1}\left \vert A_{11,\overline{1}}\right \vert
\right)  (x)
\]
for all $x\in M$. As for the case of $C_{0}a+C_{1}\frac{c}{2}=0$, the same
deduction could be applied. Therefore, $\left(  C_{0},C_{1}\right)
$-positivity is equivalent to
\[
R>2\left(  C_{0}|A_{11}|+C_{1}\left \vert A_{11,\overline{1}}\right \vert
\right)  .
\]

\end{proof}

\begin{definition}
(\cite{l}) (i) A contact form $\theta$ on a closed strictly pseudoconvex CR
$(2n+1)$-manifold $(M,\theta)$ is said to be pseudo-Einstein for $n\geq2$ if
the pseudohermitian Ricci tensor $R_{\alpha \overline{\beta}}$ is proportional
to the Levi form $h_{\alpha \overline{\beta}}$, i.e.,
\[
\ R_{\alpha \overline{\beta}}=\frac{R}{n}h_{\alpha \overline{\beta}},
\]
where $R=h^{\alpha \overline{\beta}}R_{\alpha \overline{\beta}}$ is the
Tanaka-Webster scalar curvature of $(J,\theta).$

(ii) (Lemma \ref{l1}) Note that any contact form on a closed strictly
pseudoconvex $3$-manifold is actually pseudo-Einstein (since the
pseudohermitian Ricci tensor has only one component $R_{1\overline{1}}$). Then
we define a contact form $\theta$ on a closed strictly pseudoconvex CR
$3$-manifold $(M,\theta)$ is said to be pseudo-Einstein if the following
tensor is vanishing
\[
W_{1}\doteqdot \left(  R_{1}-iA_{11_{,}\overline{1}}\right)  =0.
\]
(iii) We define the first Chern class $c_{1}(T_{1,0}M)\in H^{2}(M,\mathbf{R})
$ for the holomorphic tangent bundle $T^{1,0}M$ by
\begin{align*}
c_{1}(T^{1,0}M)  & =\frac{i}{2\pi}[d\omega_{\alpha}{}^{\alpha}]\\
& =\frac{i}{2\pi}[R_{\alpha \overline{\beta}}\theta^{\alpha}\wedge
\theta^{\overline{\beta}}+A_{\alpha \mu,\overline{\alpha}}{}\theta^{\mu}%
\wedge \theta-A_{\overline{\alpha}\overline{\mu},\alpha}\theta^{\overline{\mu}%
}\wedge \theta].
\end{align*}
(iv) Note that any pseudo-Einstein manifold $(M^{2n+1},\theta)$, the first
Chern class $c_{1}(T_{1,0}M)$ of $T_{1,0}(M)$ is vanishing (\cite{l}).
\end{definition}

Next let us recall the equivalent definitions of the pseudo-Einsteinian
$(2n+1)$-manifold for $n\geq2$ and $n=1$ as well.

\begin{lemma}
\label{l1} (i) If $(M,J,\theta)$ is a strictly pseudoconvex CR $(2n+1)$%
-manifold for $n\geq2$, then the following propositions are all equivalent :%
\[%
\begin{array}
[c]{cl}%
\left(  1\right)  & R_{\alpha \overline{\beta}}=\frac{R}{n}h_{\alpha
\overline{\beta}},\text{ \  \ }\\
\left(  2\right)  & \left(  \omega_{\alpha}^{\alpha}+\frac{i}{n}%
R\theta \right)  \text{ }\mathrm{is}\text{ }\mathrm{closed,}\\
\left(  3\right)  & W_{\alpha}\doteqdot \left(  R,_{\alpha}-inA_{\alpha \beta
},^{\beta}\right)  =0\text{.\ }%
\end{array}
\]
As for $n=1,$ we still have the equivalent between $(2)$ and $(3).$

(ii) By the equivalence of $\left(  2\right)  $ and $\left(  3\right)  $, we
see the first Chern class $c_{1}(T_{1,0}M)$ is vanishing if $(M,J,\theta)$ is
a pseudo-Einsteinian $3$-manifold.
\end{lemma}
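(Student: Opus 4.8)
The plan is to route everything through a single computation, the exterior derivative of the $1$-form $\sigma:=\omega_{\alpha}{}^{\alpha}+\frac{i}{n}R\theta$. Starting from the expression for $d\omega_{\alpha}{}^{\alpha}$ recorded in the definition of $c_{1}(T_{1,0}M)$, together with $d(R\theta)=dR\wedge\theta+R\,d\theta$ and the structure equation $d\theta=ih_{\alpha\overline{\beta}}\theta^{\alpha}\wedge\theta^{\overline{\beta}}$, I would first note that the $\theta$-component of $dR$ is annihilated upon wedging with $\theta$, and then use the symmetry $A_{\alpha\beta}=A_{\beta\alpha}$ to rewrite $A_{\alpha\mu,\overline{\alpha}}=A_{\mu\beta,}{}^{\beta}$. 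Regrouping the resulting $(1,0)\wedge\theta$ terms into $W_{\mu}=R_{,\mu}-inA_{\mu\beta,}{}^{\beta}$ gives
\[
d\sigma=\left(R_{\alpha\overline{\beta}}-\tfrac{R}{n}h_{\alpha\overline{\beta}}\right)\theta^{\alpha}\wedge\theta^{\overline{\beta}}+\tfrac{i}{n}W_{\alpha}\,\theta^{\alpha}\wedge\theta-\tfrac{i}{n}\overline{W_{\alpha}}\,\theta^{\overline{\alpha}}\wedge\theta .
\]
The three summands lie in the linearly independent bidegrees $(1,1)$, $(1,0)\wedge\theta$ and $(0,1)\wedge\theta$, so $d\sigma=0$ forces each coefficient to vanish on its own.

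Reading off this decomposition is the engine of the argument: condition $(2)$, namely $d\sigma=0$, holds if and only if the $(1,1)$-part vanishes (which is precisely $(1)$) and the mixed part vanishes (which is precisely $(3)$). Hence $(2)\Leftrightarrow[(1)\text{ and }(3)]$ for every $n$. To link $(1)$ and $(3)$ directly I would contract the Bianchi identity from the introduction, $R_{\alpha\overline{\beta},}{}^{\overline{\beta}}=R_{,\alpha}-i(n-1)A_{\alpha\beta,}{}^{\beta}$: assuming $(1)$ and using that $h_{\alpha\overline{\beta}}$ is parallel, the left-hand side equals $\tfrac{1}{n}R_{,\alpha}$, so the identity becomes $\tfrac{n-1}{n}R_{,\alpha}=i(n-1)A_{\alpha\beta,}{}^{\beta}$; for $n\geq2$ the factor $(n-1)$ cancels and yields $R_{,\alpha}=inA_{\alpha\beta,}{}^{\beta}$, i.e. $(1)\Rightarrow(3)$.

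These two observations assemble the two cases asymmetrically, which is exactly why the statement itself is asymmetric. For $n\geq2$, combining $(1)\Rightarrow(3)$ with $(2)\Leftrightarrow[(1)\text{ and }(3)]$ gives $(1)\Leftrightarrow(2)$, while $(2)\Rightarrow(3)$ is immediate. For $n=1$ the Ricci tensor has the single component $R_{1\overline{1}}$ and $R_{1\overline{1}}=Rh_{1\overline{1}}$ holds identically, so $(1)$ is vacuous and $(2)\Leftrightarrow[(1)\text{ and }(3)]$ collapses to $(2)\Leftrightarrow(3)$. I expect the genuinely delicate point to be the remaining implication $(3)\Rightarrow(1)$ needed to close the full equivalence for $n\geq2$: substituting $(3)$ into the contracted Bianchi identity only shows that the trace-free Ricci tensor $E_{\alpha\overline{\beta}}:=R_{\alpha\overline{\beta}}-\tfrac{R}{n}h_{\alpha\overline{\beta}}$ is divergence free, and passing from this first-order information to $E_{\alpha\overline{\beta}}\equiv0$ requires finer input (a second Bianchi-type relation or an integral argument); this is the step I would have to treat with care, and it is also all that separates the core equivalence $(1)\Leftrightarrow(2)$ used later in the paper from the full three-way equivalence.

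Finally, for part $(ii)$ with $n=1$, once $(2)\Leftrightarrow(3)$ is available the conclusion is immediate. If $(M,J,\theta)$ is pseudo-Einstein then $W_{1}=0$, i.e. $(3)$ holds, so by $(2)$ the form $\sigma=\omega_{1}{}^{1}+iR\theta$ is closed; therefore $d\omega_{1}{}^{1}=-i\,d(R\theta)$ is globally exact, the de Rham class $[d\omega_{1}{}^{1}]$ vanishes, and since $c_{1}(T_{1,0}M)=\frac{i}{2\pi}[d\omega_{1}{}^{1}]$ we conclude $c_{1}(T_{1,0}M)=0$.
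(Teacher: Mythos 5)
Your proposal is correct where it is complete, and its engine is the same computation the paper uses: both arguments expand $d\bigl(\omega_{\alpha}{}^{\alpha}+\tfrac{i}{n}R\theta\bigr)$ against the independent bidegrees $\theta^{\alpha}\wedge\theta^{\overline{\beta}}$, $\theta^{\alpha}\wedge\theta$, $\theta^{\overline{\alpha}}\wedge\theta$. (A cosmetic slip: since the form is purely imaginary, the last coefficient should be $+\tfrac{i}{n}\overline{W_{\alpha}}$ rather than $-\tfrac{i}{n}\overline{W_{\alpha}}$; this changes nothing.) For $n=1$ the $(1,1)$ component cancels identically and your argument reproduces the paper's displayed computation exactly, and your proof of part (ii) is the intended one. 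Where you genuinely diverge is the $n\geq2$ case: the paper cites Lee for $(1)\Leftrightarrow(2)$ and asserts that the $(2)\Leftrightarrow(3)$ argument ``is the same'' as for $n=1$, whereas you derive $(2)\Leftrightarrow[(1)\text{ and }(3)]$ from the bidegree decomposition and get $(1)\Rightarrow(3)$ from the contracted Bianchi identity, which yields a self-contained proof of $(1)\Leftrightarrow(2)$ and of $(2)\Rightarrow(3)$ --- more than the paper actually writes down.

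The implication you leave open, $(3)\Rightarrow(1)$ for $n\geq2$, is a real gap, and your suspicion about it is well founded: feeding $W_{\alpha}=0$ into the contracted Bianchi identity only shows that the trace-free Ricci tensor is divergence-free, and this cannot be upgraded to its vanishing. In fact the implication fails in general: a regular Sasakian manifold fibering over a constant-scalar-curvature, non-Einstein K\"ahler manifold has $A_{\alpha\beta}=0$ and $R$ constant, hence $W_{\alpha}=0$, yet is not pseudo-Einstein; so for $n\geq2$ condition $(3)$ is strictly weaker than $(1)$. Note, however, that the paper's own proof does not close this implication either --- it is hidden in the unjustified claim that the $n\geq2$ proof of $(2)\Leftrightarrow(3)$ is identical to the $n=1$ one, which it cannot be since the $(1,1)$ component no longer cancels automatically. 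Everything the paper actually uses later (the $n=1$ equivalence $(2)\Leftrightarrow(3)$ and part (ii)) you prove completely, so nothing downstream is affected; but as a proof of the lemma as literally stated, your argument and the paper's leave the same implication unestablished.
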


\begin{proof}
The equivalence of $\left(  1\right)  $ and $\left(  2\right)  $ could be
found in \cite{l} for $n\geq2$. The proof of $\left(  2\right)
\Longleftrightarrow \left(  3\right)  $ for $n\geq2$ is the same with $n=1$.
So, for simplification, we just give the proof of the equivalence of $\left(
2\right)  \Longleftrightarrow \left(  3\right)  $ for $n=1$.

Because
\[
d\omega_{1}^{1}=R\theta^{1}\wedge \theta^{\overline{1}}+A_{11,\overline{1}%
}\theta^{1}\wedge \theta-A_{\overline{1}\overline{1},1}\theta^{\overline{1}%
}\wedge \theta,
\]
we have%
\[%
\begin{array}
[c]{ccl}%
d\left(  \omega_{1}^{1}+iR\theta \right)  & = & d\omega_{1}^{1}+i\left(
R_{,1}\theta^{1}+R_{,\overline{1}}\theta^{\overline{1}}\right)  \wedge
\theta-R\theta^{1}\wedge \theta^{\overline{1}}\\
& = & i\left[  \left(  R_{,1}-iA_{11,\overline{1}}\right)  \theta^{1}+\left(
R_{,\overline{1}}+iA_{\overline{1}\overline{1},1}\right)  \theta^{\overline
{1}}\right]  \wedge \theta
\end{array}
.
\]

Hence%

\[
d\left(  \omega_{1}^{1}+iR\theta \right)  =0\Longleftrightarrow R_{,1}%
-iA_{11,\overline{1}}=0.
\]

\end{proof}

We recall some useful notations as well.

\begin{definition}
(\cite{l}) (i) Let $(M,J,\theta)$ be a three-dimensional strictly pseudoconvex
CR manifold. We define
\[
P\varphi=(P_{1}\varphi)\theta^{1},
\]
which is an operator that characterizes CR-pluriharmonic functions. Here
$P_{1}\varphi=\varphi_{\bar{1}}{}^{\bar{1}}{}_{1}+iA_{11}\varphi^{1}$ and
$\overline{P}\varphi=(\overline{P}_{1})\theta^{\bar{1}}$, the conjugate of
$P$. The CR Paneitz operator $P_{0}$ is defined by%
\begin{equation}
P_{0}\varphi=\left(  \delta_{b}(P\varphi)+\overline{\delta}_{b}(\overline
{P}\varphi)\right)  ,\label{id9}%
\end{equation}
where $\delta_{b}$ is the divergence operator that takes $(1,0)$-forms to
functions by $\delta_{b}(\sigma_{1}\theta^{1})=\sigma_{1,}{}^{1}$ and,
similarly, $\bar{\delta}_{b}(\sigma_{\bar{1}}\theta^{\bar{1}})=\sigma_{\bar
{1},}{}^{\bar{1}}$. We observe that%
\begin{equation}
\int_{M}\langle P\varphi+\overline{P}\varphi,d_{b}\varphi \rangle_{L_{\theta
}^{\ast}}\ d\mu=-\int_{M}P_{0}\varphi \cdot \varphi \ d\mu \label{10}%
\end{equation}
with $d\mu=\theta \wedge d\theta.$ One can check that $P_{0}$ is self-adjoint,
that is, $\left \langle P_{0}\varphi,\psi \right \rangle =\left \langle
\varphi,P_{0}\psi \right \rangle $ for all smooth functions $\varphi$ and $\psi
$. For the details about these operators, the reader can make reference to
\cite{gl}, \cite{h}, \cite{l}, \cite{gg} and \cite{fh}.

(ii) On a complete pseudohermitian $3$-manifold $(M,J,\theta),$ we call the
Paneitz operator $P_{0}$ with respect to $(J,\theta)$ essentially positive if
there exists a constant $\Lambda$ $>$ $0$ such that
\begin{equation}
\int_{M}P_{0}\varphi \cdot \varphi d\mu \geq \Lambda \int_{M}\varphi^{2}%
d\mu.\label{41}%
\end{equation}
for all real smooth functions $\varphi$ $\in(\ker P_{0})^{\perp}$ (i.e.
perpendicular to the kernel of $P_{0}$ in the $L^{2}$ norm with respect to the
volume form $d\mu$ $=$ $\theta \wedge d\theta).$ We say that $P_{0}$ is
nonnegative if
\[
\int_{M}P_{0}\varphi \cdot \varphi d\mu \geq0
\]
for all real smooth functions $\varphi$.
\end{definition}

\begin{remark}
\label{r1} 1. The notions of Paneitz operator $P_{0}$ and $Q$-curvature were
initially introduced on a Riemannian manifold, and were considered as a kind
of generalization of Laplacian and Gaussian curvature on a two-dimensional
manifold, respectively (\cite{h}).

2. The kernel of the CR Paneitz operator $P_{0}$ is infinite dimensional,
containing all CR-pluriharmonic functions.

3. Let $(M,J,\theta)$ be a closed strictly pseudoconvex $3$-manifold with
vanishing pseudohermitian torsion. Then the corresponding CR Paneitz operator
$P_{0}$ is essentially positive ( \cite{ccc}, \cite{cac}).
\end{remark}

Finally, we define the CR $Q$-curvature in a pseudohermitian $3$-manifold by
\begin{equation}
Q:=-\mathrm{Re}(R_{_{,}1}-iA_{11,\bar{1}})_{\bar{1}}=-\mathrm{Re}(R_{,1\bar
{1}}-iA_{11,\bar{1}\bar{1}}).\label{c}%
\end{equation}
Then
\[
Q=-\frac{1}{2}[\Delta_{b}R-i(A_{11,\bar{1}\bar{1}}-A_{\bar{1}\bar{1},11})].
\]
Now for $\theta=e^{2\gamma}\theta_{0}$, under this conformal change, it is
known that we have the following transformation laws (\cite{h}) :
\begin{equation}
Q=e^{-4\gamma}(Q_{0}+\frac{3}{4}P_{0}\gamma)\label{0b}%
\end{equation}
and
\begin{equation}
W_{1}:=(R_{,1}-iA_{11},_{\bar{1}})=e^{-3\gamma}[\overset{0}{R}_{,1}%
-i\overset{0}{A}_{11,\bar{1}}-6\overset{0}{P}_{1}\gamma],\label{0c}%
\end{equation}
where $P_{0}$ and $Q_{0}$ denote the CR Paneitz operator and the CR
$Q$-curvature with respect to $(M,J,$ $\theta_{0})$, respectively.

Finally, we recall that

\begin{definition}
We call a CR structure $J$ spherical if Cartan curvature tensor $Q_{11}$
vanishes identically. Here
\[
Q_{11}=\frac{1}{6}R_{11}+\frac{i}{2}RA_{11}-A_{11,0}-\frac{2i}{3}%
A_{11,\overset{\_}{1}1}.
\]
Note that $(M,J,\theta)$ is called a spherical CR $3$-manifold if $J$ is a
spherical structure. We observe that the spherical structure is CR invariant
and a closed spherical CR $3$-manifold $(M,J,\theta)$ is locally CR equivalent
to the standard CR $3$-sphere $(\mathbf{S}^{3},\widehat{J},\widehat{\theta}).$
In additional, if $M$ is simply connected, then $(M,J,\theta)$ is the standard
CR $3$-sphere.
\end{definition}

\section{Proofs of Main Theorems}

In this section, we prove the main theorems. We start from the groundwork for
Theorem \ref{t1}.

\begin{lemma}
\label{l2} If $(M,J,\theta)$ is a strictly pseudoconvex $3$-manifold with
$c_{1}(T_{1,0}M)=0$, then there is a pure imaginary $1$-form
\[
\sigma=\sigma_{\overline{1}}\theta^{\overline{1}}-\sigma_{1}\theta^{1}%
+i\sigma_{0}\theta
\]
with $d\omega_{1}^{1}=d\sigma$ such that%
\begin{equation}
\left \{
\begin{array}
[c]{l}%
R=R_{1\overline{1}}=\sigma_{\overline{1},1}+\sigma_{1,\overline{1}}-\sigma
_{0}\\
A_{11,\overline{1}}=\sigma_{1,0}+i\sigma_{0,1}-A_{11}\sigma_{\overline{1}}%
\end{array}
\right.  .\label{1}%
\end{equation}

\end{lemma}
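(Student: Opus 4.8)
The plan is to obtain $\sigma$ as a \emph{pure imaginary primitive} of the curvature form $d\omega_1^1$ and then to read off the system (\ref{1}) by expanding $d\sigma$ in the admissible coframe $\{\theta^1,\theta^{\overline{1}},\theta\}$. First I would use the hypothesis $c_1(T_{1,0}M)=0$. Although the connection form $\omega_1^1$ is only locally defined (it depends on the choice of admissible frame), its exterior derivative $d\omega_1^1$ is a globally defined closed $2$-form, and by the definition of the first Chern class its de Rham class equals $c_1(T_{1,0}M)$ up to the normalizing factor $\tfrac{i}{2\pi}$. Hence the vanishing of $c_1(T_{1,0}M)$ in $H^2(M,\mathbf{R})$ forces the closed form $d\omega_1^1$ to be exact, so there exists a global complex $1$-form $\beta$ with $d\omega_1^1=d\beta$.

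Next I would upgrade $\beta$ to a pure imaginary primitive. Starting from the structure equation
\[
d\omega_1^1 = R\,\theta^1\wedge\theta^{\overline{1}} + A_{11,\overline{1}}\,\theta^1\wedge\theta - A_{\overline{1}\,\overline{1},1}\,\theta^{\overline{1}}\wedge\theta,
\]
the reality of $R$ together with $\overline{A_{11,\overline{1}}}=A_{\overline{1}\,\overline{1},1}$ gives $\overline{d\omega_1^1}=-d\omega_1^1$, so the curvature form is pure imaginary. Consequently $-\overline{\beta}$ is again a primitive, and $\sigma:=\tfrac12(\beta-\overline{\beta})$ satisfies $d\sigma=d\omega_1^1$ and $\overline{\sigma}=-\sigma$. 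Imposing $\overline{\sigma}=-\sigma$ on the general expansion of a $1$-form in the coframe $\{\theta^1,\theta^{\overline{1}},\theta\}$ forces precisely the shape
\[
\sigma = \sigma_{\overline{1}}\theta^{\overline{1}} - \sigma_1\theta^1 + i\sigma_0\theta,
\qquad \sigma_1=\overline{\sigma_{\overline{1}}},\quad \sigma_0\in\mathbf{R},
\]
which is the asserted pure imaginary form.

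The final and most technical step is to expand $d\sigma$ using the pseudohermitian structure equations for $d\theta$, $d\theta^1$, $d\theta^{\overline{1}}$ and the definitions of the covariant derivatives $\sigma_{\overline{1},1}$, $\sigma_{1,\overline{1}}$, $\sigma_{1,0}$, $\sigma_{0,1}$, and then to compare coefficients with the displayed expression for $d\omega_1^1=d\sigma$. Matching the $\theta^1\wedge\theta^{\overline{1}}$-component yields the scalar-curvature identity $R=\sigma_{\overline{1},1}+\sigma_{1,\overline{1}}-\sigma_0$, while matching the $\theta^1\wedge\theta$-component (equivalently, by conjugation, the $\theta^{\overline{1}}\wedge\theta$-component) yields the torsion identity $A_{11,\overline{1}}=\sigma_{1,0}+i\sigma_{0,1}-A_{11}\sigma_{\overline{1}}$. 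The main obstacle is the bookkeeping in this last step: the factor $-\sigma_0$ arises from the contribution $i\sigma_0\,d\theta$, while the term $-A_{11}\sigma_{\overline{1}}$ arises solely from the torsion contribution to the structure equation for $d\theta^{\overline{1}}$ acting on the summand $\sigma_{\overline{1}}\theta^{\overline{1}}$. One must therefore track the connection form $\omega_1^1$ and the torsion terms with care, since the connection contributions must cancel against those hidden in the covariant derivatives while the torsion terms must survive with the correct sign and index placement.
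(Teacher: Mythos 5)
Your proposal is correct and follows essentially the same route as the paper: use $c_{1}(T_{1,0}M)=0$ to produce a primitive of the pure imaginary closed form $d\omega_{1}^{1}$, then expand $d\sigma$ in the admissible coframe via the structure equations and match the $\theta^{1}\wedge\theta^{\overline{1}}$ and $\theta^{1}\wedge\theta$ coefficients. Your explicit symmetrization $\sigma=\tfrac{1}{2}\left(\beta-\overline{\beta}\right)$ to justify that the primitive may be taken pure imaginary is a small welcome addition to a point the paper simply asserts.
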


\begin{proof}
Because%
\[
c_{1}(T_{1,0}M)=-\frac{1}{2\pi i}\left[  d\omega_{1}^{1}\right]  =0,
\]

we know there is a pure imaginary $1$-form%
\[
\sigma=\sigma_{\overline{1}}\theta^{\overline{1}}-\sigma_{1}\theta^{1}%
+i\sigma_{0}\theta
\]
such that%
\[
d\omega_{1}^{1}=d\sigma.
\]

By the structure equation%
\[
\left \{
\begin{array}
[c]{l}%
d\theta=i\theta^{1}\wedge \theta^{\overline{1}}\\
d\theta^{1}=A_{\overline{1}\overline{1}}\theta \wedge \theta^{1}%
\end{array}
\right.  ,
\]
we have%
\[%
\begin{array}
[c]{ccl}%
d\sigma & = & \left(  \sigma_{\overline{1},1}\theta^{1}+\sigma_{\overline
{1},0}\theta \right)  \wedge \theta^{\overline{1}}+\sigma_{\overline{1}}%
d\theta^{\overline{1}}-\left(  \sigma_{1,\overline{1}}\theta^{\overline{1}%
}+\sigma_{1,0}\theta \right)  \wedge \theta^{1}-\\
&  & \sigma_{1}d\theta^{1}+i\left(  \sigma_{0,1}\theta^{1}+\sigma
_{0,\overline{1}}\theta^{\overline{1}}\right)  \wedge \theta+i\sigma_{0}%
d\theta \\
& = & \left(  \sigma_{\overline{1},1}+\sigma_{1,\overline{1}}-\sigma
_{0}\right)  \theta^{1}\wedge \theta^{\overline{1}}-\left(  \sigma
_{1,0}+i\sigma_{0,1}-\sigma_{\overline{1}}A_{11}\right)  \theta \wedge
\theta^{1}+\\
&  & \left(  \sigma_{\overline{1},0}-i\sigma_{0,\overline{1}}-\sigma
_{1}A_{\overline{1}\overline{1}}\right)  \theta \wedge \theta \overline{^{1}}.
\end{array}
.
\]

Due to
\[
d\sigma=d\omega_{1}^{1}=R_{1\overline{1}}\theta^{1}\wedge \theta^{\overline{1}%
}+A_{11,\overline{1}}\theta^{1}\wedge \theta-A_{\overline{1}\overline{1}%
,1}\theta^{\overline{1}}\wedge \theta,
\]
we derive%
\[
\left \{
\begin{array}
[c]{l}%
R=R_{1\overline{1}}=\sigma_{\overline{1},1}+\sigma_{1,\overline{1}}-\sigma
_{0}\\
A_{11,\overline{1}}=\sigma_{1,0}+i\sigma_{0,1}-A_{11}\sigma_{\overline{1}}%
\end{array}
\right.  .
\]

\end{proof}

\bigskip

We would need the J.J. Kohn's Hodge theory for the $\overline{\partial}_{b}$
complex (see \cite{k}) :

\begin{lemma}
\label{l3}If $(M,J,\theta)$ is an embeddable closed strictly pseudoconvex CR
$3$-manifold and $\eta \in \Omega^{0,1}\left(  M\right)  $, a smooth $\left(
0,1\right)  $-form on $M$ with
\[
\overline{\partial}_{b}\eta=0,
\]
then there are a complex function $\varphi \in C_{%
\mathbb{C}
}^{\infty}\left(  M\right)  $ and $\gamma \in \Omega^{0,1}\left(  M\right)  $
such that
\[
\left(  \eta-\overline{\partial}_{b}\varphi \right)  =\gamma \in \ker \left(
\square_{b}\right)  ,
\]
where $\square_{b}=2\left(  \overline{\partial}_{b}\overline{\partial}%
_{b}^{\ast}+\overline{\partial}_{b}^{\ast}\overline{\partial}_{b}\right)  $ is
the Kohn-Rossi Laplacian.
\end{lemma}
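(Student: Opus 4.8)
The statement is the Hodge--Kodaira decomposition for the Kohn--Rossi complex restricted to $(0,1)$-forms, and the plan is to isolate the single piece of genuine analysis it requires, namely the \emph{closed range} of $\overline{\partial}_b$. I would begin by recording the simplification peculiar to CR dimension one: since $\Omega^{0,2}(M)=0$ on a CR $3$-manifold, the operator $\overline{\partial}_b$ vanishes identically on $\Omega^{0,1}(M)$, so the hypothesis $\overline{\partial}_b\eta=0$ is automatically satisfied and, on $(0,1)$-forms,
\[
\square_b = 2\,\overline{\partial}_b\overline{\partial}_b^{\ast}.
\]
A one-line integration by parts, $\langle \square_b\gamma,\gamma\rangle = 2\|\overline{\partial}_b^{\ast}\gamma\|^{2}$, then identifies $\ker\square_b = \ker\overline{\partial}_b^{\ast}$ as subspaces of $\Omega^{0,1}(M)$.

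The crucial and hardest step is the analytic input: for an embeddable closed strictly pseudoconvex CR $3$-manifold the operator $\overline{\partial}_b\colon C^{\infty}_{\mathbb{C}}(M)\to\Omega^{0,1}(M)$ has closed range in $L^{2}$. This is precisely J.~J. Kohn's theorem, and embeddability is indispensable here: in CR dimension one the top-degree subelliptic estimate (condition $Y(1)$) fails, so closed range is not automatic and is in fact tied to embeddability of the structure. As the hypothesis grants embeddability, I would invoke this result rather than reprove it; it is the only nontrivial ingredient.

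Granting closed range, the conclusion is abstract functional analysis. For the closed, densely defined operator $\overline{\partial}_b$ one has the orthogonal splitting
\[
L^{2}_{(0,1)}(M) = \mathrm{Range}\,\overline{\partial}_b \ \oplus\ \ker\overline{\partial}_b^{\ast},
\]
with no closure needed precisely because the range is closed; equivalently there is a Neumann operator $N$ inverting $\square_b$ off its kernel and a harmonic projection $H$ onto $\ker\square_b$ satisfying $\eta = \square_b N\eta + H\eta$. Using $\square_b=2\overline{\partial}_b\overline{\partial}_b^{\ast}$ on $(0,1)$-forms I would set
\[
\varphi := 2\,\overline{\partial}_b^{\ast}N\eta,\qquad \gamma := H\eta,
\]
so that $\eta-\overline{\partial}_b\varphi = \gamma$, and by the previous paragraph $\gamma\in\ker\overline{\partial}_b^{\ast}=\ker\square_b$, as required.

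It remains to check that $\varphi$ and $\gamma$ are smooth, which follows from the hypoellipticity of $\square_b$ transverse to its kernel: Kohn's subelliptic estimates for the Neumann operator $N$ propagate the smoothness of $\eta$ to $N\eta$, and hence to $\varphi$, while $\gamma=H\eta$ is harmonic and therefore smooth. In summary, once the closed-range theorem is in hand the argument is the standard Hodge decomposition specialized to the vanishing of $(0,2)$-forms in CR dimension one, and the essential difficulty is concentrated entirely in that analytic input.
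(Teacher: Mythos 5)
The paper offers no proof of this lemma at all: it is quoted verbatim as ``J.~J.~Kohn's Hodge theory for the $\overline{\partial}_b$ complex'' with a citation to Kohn, and the argument immediately moves on to Lemma \ref{l4}. Your proposal is therefore not comparable to an in-paper argument, but as a reconstruction of the cited result it is essentially correct and identifies exactly the right division of labor: the observation that $\Omega^{0,2}(M)=0$ in CR dimension one, so that $\square_b=2\overline{\partial}_b\overline{\partial}_b^{\ast}$ on $(0,1)$-forms and $\ker\square_b=\ker\overline{\partial}_b^{\ast}$; the closed-range theorem for $\overline{\partial}_b$ on embeddable compact strictly pseudoconvex $3$-manifolds as the sole nontrivial analytic input (and you are right that this is where embeddability is genuinely used); and the resulting orthogonal decomposition $L^2_{(0,1)}=\mathrm{Range}\,\overline{\partial}_b\oplus\ker\overline{\partial}_b^{\ast}$ with no closure. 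The one imprecision is in your regularity step: having correctly noted that condition $Y(1)$ fails at top degree in CR dimension one, you cannot then appeal to ``subelliptic estimates for the Neumann operator'' to get smoothness of $N\eta$ and $\varphi=2\overline{\partial}_b^{\ast}N\eta$ --- at this degree $\square_b$ is not subelliptic, and the Sobolev continuity of $N$ and of the harmonic projector is instead part of Kohn's global regularity theory, derived from the closed-range hypothesis (microlocal a priori estimates), not from a subelliptic gain. This does not affect the validity of the conclusion, but the phrasing is internally inconsistent and should be corrected to cite global rather than subelliptic regularity.
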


Subsequently, we deduce the expression for $W_{1}$. We denote $\gamma
_{1}:=\overline{\gamma_{\overline{1}}}.$

\begin{lemma}
\label{l4} If $(M,J,\theta)$ is an embeddable closed strictly pseudoconvex CR
$3$-manifold with $c_{1}(T_{1,0}M)=0$, then there are $u\in C_{%
\mathbb{R}
}^{\infty}\left(  M\right)  $ and $\gamma=\gamma_{\overline{1}}\theta
^{\overline{1}}\in \Omega^{0,1}\left(  M\right)  $ with
\[
\gamma_{\overline{1},1}=\gamma_{1,\overline{1}}=0
\]
such that
\begin{equation}
W_{1}=2P_{1}u+i\left(  A_{11}\gamma_{\overline{1}}-\gamma_{1,0}\right)
.\label{0}%
\end{equation}

\end{lemma}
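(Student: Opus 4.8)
The plan is to feed the structural identities of Lemma \ref{l2} through the Kohn--Rossi Hodge decomposition of Lemma \ref{l3} and then reduce the resulting expression for $W_1$ by the pseudohermitian commutation formulas. First I would invoke Lemma \ref{l2} to produce the pure imaginary $1$-form $\sigma=\sigma_{\overline 1}\theta^{\overline 1}-\sigma_1\theta^1+i\sigma_0\theta$ with $d\omega_1^1=d\sigma$ and the two identities
\begin{equation*}
R=\sigma_{\overline 1,1}+\sigma_{1,\overline 1}-\sigma_0,\qquad A_{11,\overline 1}=\sigma_{1,0}+i\sigma_{0,1}-A_{11}\sigma_{\overline 1}.
\end{equation*}
Since $M$ is $3$-dimensional there are no $(0,2)$-forms, so $\sigma_{\overline 1}\theta^{\overline 1}$ is automatically $\overline{\partial}_b$-closed and Lemma \ref{l3} applies to $\eta=\sigma_{\overline 1}\theta^{\overline 1}$. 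This produces $\varphi=u+iv\in C^{\infty}_{\mathbb{C}}(M)$ and $\gamma=\gamma_{\overline 1}\theta^{\overline 1}\in\ker\square_b$ with $\sigma_{\overline 1}=\varphi_{\overline 1}+\gamma_{\overline 1}$; I take $u=\operatorname{Re}\varphi$, and the reality of $\sigma$ gives $\sigma_1=\overline{\sigma_{\overline 1}}=\overline{\varphi_{\overline 1}}+\gamma_1=u_1-iv_1+\gamma_1$.

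Next I would record the harmonicity of $\gamma$. Because $\overline{\partial}_b\gamma=0$ holds trivially in dimension $3$, membership in $\ker\square_b$ forces $\overline{\partial}_b^{*}\gamma=0$ after integrating $\langle\square_b\gamma,\gamma\rangle=0$ by parts over the closed manifold $M$. In components this is exactly $\gamma_{\overline 1,1}=0$, and conjugation gives $\gamma_{1,\overline 1}=0$, which is the asserted divergence-free condition. Differentiating these vanishing tensors once more yields $\gamma_{\overline 1,11}=\gamma_{1,\overline 1 1}=0$, a fact that will annihilate the higher-order $\gamma$-terms in the main computation.

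The core step is the evaluation of $W_1=R_{,1}-iA_{11,\overline 1}$. Substituting the two identities from Lemma \ref{l2} and cancelling the $\pm\sigma_{0,1}$ terms produces
\begin{equation*}
W_1=\sigma_{\overline 1,11}+\sigma_{1,\overline 1 1}-i\sigma_{1,0}+iA_{11}\sigma_{\overline 1}.
\end{equation*}
I would then insert $\sigma_{\overline 1}=u_{\overline 1}+iv_{\overline 1}+\gamma_{\overline 1}$ and $\sigma_1=u_1-iv_1+\gamma_1$ and sort the outcome into its $u$-, $v$- and $\gamma$-contributions. Using the commutation relations $\psi_{1\overline 1}-\psi_{\overline 1 1}=i\psi_0$ and $\psi_{01}-\psi_{10}=A_{11}\psi_{\overline 1}$ for a scalar $\psi$, the real part collapses via $u_{1\overline 1 1}-iu_{10}=u_{\overline 1 11}+iA_{11}u_{\overline 1}$ into $2u_{\overline 1 11}+2iA_{11}u_{\overline 1}=2P_1u$; the same two identities show the $v$-contribution $i(v_{\overline 1 11}-v_{1\overline 1 1})-v_{10}-A_{11}v_{\overline 1}$ reduces to $(v_{01}-v_{10})-A_{11}v_{\overline 1}=0$; and since $\gamma_{\overline 1,11}=\gamma_{1,\overline 1 1}=0$ the $\gamma$-contribution is just $-i\gamma_{1,0}+iA_{11}\gamma_{\overline 1}$. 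Assembling the three pieces gives $W_1=2P_1u+i(A_{11}\gamma_{\overline 1}-\gamma_{1,0})$, as claimed.

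I expect the main obstacle to be the bookkeeping in this last step: correctly splitting $\sigma_{\overline 1}$ and $\sigma_1$ into real, imaginary and harmonic parts and tracking the factors of $i$ so that the $v$-terms cancel exactly. The computation hinges on applying the pseudohermitian commutation formulas (together with the symmetry of $A_{11}$) with the correct sign conventions, and on the observation that the harmonicity of $\gamma$ supplies not only the stated first-order conditions $\gamma_{\overline 1,1}=\gamma_{1,\overline 1}=0$ but also kills the two third-order terms $\sigma_{\overline 1,11}$ and $\sigma_{1,\overline 1 1}$ arising from $\gamma$.
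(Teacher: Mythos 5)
Your proposal is correct and follows essentially the same route as the paper: Lemma \ref{l2} plus the Kohn--Rossi decomposition of Lemma \ref{l3} applied to $\sigma_{\overline{1}}\theta^{\overline{1}}$, the harmonicity of $\gamma$ giving $\gamma_{\overline{1},1}=\gamma_{1,\overline{1}}=0$, and the commutation relations to reorder $\sigma_{1,\overline{1}1}$. The only cosmetic difference is that you split $\varphi$ into $u+iv$ and check the $v$-terms cancel by hand, whereas the paper keeps $\varphi$ and $\overline{\varphi}$ together so the imaginary part drops out automatically via $\varphi+\overline{\varphi}=2u$.
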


\begin{proof}
By choosing $\eta=\sigma_{\overline{1}}\theta^{\overline{1}}$ as in Lemma
\ref{l3}, where $\sigma$ is chosen from Lemma \ref{l2},\textbf{\ }there are%
\[
\varphi=u+iv\in C_{%
\mathbb{C}
}^{\infty}\left(  M\right)
\]
and%
\[
\gamma=\gamma_{\overline{1}}\theta^{\overline{1}}\in \Omega^{0,1}\left(
M\right)  \cap \ker \left(  \square_{b}\right)
\]
such that%
\begin{equation}
\sigma_{\overline{1}}=\varphi_{\overline{1}}+\gamma_{\overline{1}}\label{18}%
\end{equation}

Note that%
\begin{equation}
\square_{b}\gamma=0\Longrightarrow \overline{\partial}_{b}^{\ast}%
\gamma=0\Longrightarrow \gamma_{\overline{1},1}=0\label{2}%
\end{equation}
and%
\begin{equation}
\sigma_{1}=\left(  \overline{\varphi}\right)  _{1}+\gamma_{1}.\label{19}%
\end{equation}

Thus%
\[%
\begin{array}
[c]{ccl}%
\sigma_{1,\overline{1}1} & = & (\overline{\varphi})_{,1\overline{1}1}%
+\gamma_{1,\overline{1}1}\text{ \ }\mathrm{by}\text{ }\left(  \ref{19}\right)
\\
& = & (\overline{\varphi})_{,1\overline{1}1}\text{ \ }\mathrm{by}\text{
}\left(  \ref{2}\right) \\
& = & (\overline{\varphi})_{,\overline{1}11}+i(\overline{\varphi})_{,01}\text{
\ }\mathrm{by}\text{ }\left(  \ref{6}\right) \\
& = & (\overline{\varphi})_{,\overline{1}11}+i\left[  (\overline{\varphi
})_{,10}+A_{11}(\overline{\varphi})_{,\overline{1}}\right]  \text{
\ }\mathrm{by}\text{ }\left(  \ref{6}\right)
\end{array}
\]
and%
\[
\sigma_{\overline{1},11}=\varphi_{,\overline{1}11}\text{ \ }\mathrm{from}%
\text{ }\left(  \ref{18}\right)  \text{ }\mathrm{and}\text{ }\left(
\ref{2}\right)
\]
imply%
\[%
\begin{array}
[c]{ccl}%
W_{1} & = & R_{,1}-iA_{11,\overline{1}}\\
& = & \sigma_{\overline{1},11}+\sigma_{1,\overline{1}1}-i\sigma_{1,0}%
+iA_{11}\sigma_{\overline{1}}\text{ \ }\mathrm{by}\text{ }\left(
\ref{1}\right) \\
& = & \varphi_{,\overline{1}11}+(\overline{\varphi})_{,\overline{1}11}%
+iA_{11}(\overline{\varphi})_{,\overline{1}}-i\gamma_{1,0}+iA_{11}\left(
\varphi_{\overline{1}}+\gamma_{\overline{1}}\right) \\
& = & 2\left(  u_{\overline{,1}11}+iA_{11}u_{\overline{1}}\right)  +i\left(
A_{11}\gamma_{\overline{1}}-\gamma_{1,0}\right) \\
& = & 2P_{1}u+i\left(  A_{11}\gamma_{\overline{1}}-\gamma_{1,0}\right)
\end{array}
.
\]

This completes the proof.
\end{proof}

Now we are ready to give the proof of Theorem \ref{t1}:

\begin{proof}
(Proof of Theorem \ref{t1}\textbf{) }Set%
\[
\widetilde{\theta}=e^{2\lambda}\theta \text{.}%
\]
By the transformation law (refer to Lemma 5.4 in \cite{h} or Lemma 3.1 in
\cite{cw}), we know%
\begin{equation}
\widetilde{W_{1}}=e^{-3\lambda}\left(  W_{1}-6P_{1}\lambda \right)  ,\label{5}%
\end{equation}
where the notation with "tilde" means such quantity corresponds to the new
contact form $\widetilde{\theta}$. With the help of $\left(  \ref{5}\right)  $
and Lemma \ref{l4}, we have%
\[
\widetilde{W_{1}}=0
\]
if and only if%
\[
W_{1}=6P_{1}\lambda
\]
if and only if
\[
6P_{1}\lambda=2P_{1}u+i\left(  A_{11}\gamma_{\overline{1}}-\gamma
_{1,0}\right)  .
\]
That is to say
\[
P_{1}f=i\left(  A_{11}\gamma_{\overline{1}}-\gamma_{1,0}\right)
\]
for
\[
f=\left(  6\lambda-2u\right)  .
\]

\end{proof}

\begin{remark}
From $\left(  \ref{7}\right)  $ and
\[
\gamma_{1,0\overline{1}}=\gamma_{1,\overline{1}0}+A_{\overline{1}\overline{1}%
}\gamma_{1,1}+A_{\overline{1}\overline{1},1}\gamma_{1}=A_{\overline
{1}\overline{1}}\gamma_{1,1}+A_{\overline{1}\overline{1},1}\gamma_{1},
\]
we could deduce $f$ satisfies the fourth-order partial differential equation
\begin{equation}
P_{0}f=2i\left[  \left(  A_{11}\gamma_{\overline{1}}\right)  _{,\overline{1}%
}-\left(  A_{\overline{1}\overline{1}}\gamma_{1}\right)  _{,1}\right]
\label{24}%
\end{equation}
where $P_{0}$ is the CR Paneitz \ operator (see section$\ 2$). This suggests
us there is an obstruction to the existence of pseudo-Einstein contact form
pertaining to the CR Paneitz operator. See Theorem \ref{t2} below for more details.
\end{remark}

As for the proof of the case of vanishing pseudohermitian torsion :

\begin{proof}
(Proof of Corollary \ref{c2})

Setting $A_{11}=0$ in $\left(  \ref{13}\right)  $, by Theorem \ref{t1}, it
suffices to show that
\[
\gamma_{1,0}=0
\]
in order to have a globally defined pseudo-Einstein contact form
$\widetilde{\theta}=e^{\frac{\left(  f+2u\right)  }{3}}\theta$.

Note that, from $\left(  \ref{0}\right)  $ and $A_{11}=0$,
\begin{equation}
R_{,1}=2u_{\overline{1}11}-i\gamma_{1,0}.\label{8}%
\end{equation}

Utilizing integration by parts, it follows from (\ref{8}) and $\gamma
_{\overline{1},1}=0$ that%

\[%
\begin{array}
[c]{ccl}%
0 & \leq & \int_{M}\left \vert \gamma_{1,0}\right \vert ^{2}d\mu \\
& = & -\int_{M}\gamma_{1}\gamma_{\overline{1},00}d\mu \\
& = & -\int_{M}\gamma_{1}\left(  -iR_{,\overline{1}}+2iu_{,1\overline
{1}\overline{1}}\right)  _{,0}d\mu \  \\
& = & i\int_{M}\gamma_{1}\left(  R_{,0}-2u_{,1\overline{1}0}\right)
_{,\overline{1}}d\mu \\
& = & -i\int_{M}\gamma_{1,\overline{1}}\left(  R_{,0}-2u_{,1\overline{1}%
0}\right)  d\mu \\
& = & 0.
\end{array}
.
\]

The third equality comes from $\left(  \ref{6}\right)  $ and $A_{11}=0$. Then
\[
\gamma_{1,0}=0.
\]

\end{proof}

Before giving the proof of Theorem \ref{t2}, we need the following
B\^{o}chner-type equality.

\begin{lemma}
\label{l8} Let $(M,J,\theta)$ be a closed strictly pseudoconvex CR
$3$-manifold and $\widetilde{\theta}=e^{\frac{\left(  f+2u\right)  }{3}}%
\theta$ is a pseudo-Einstein contact form. Then we have%
\begin{equation}%
{\displaystyle \int \limits_{M}}
\left(  2R-Tor\right)  \left(  \gamma,\gamma \right)  d\mu+2%
{\displaystyle \int \limits_{M}}
\left \vert \gamma_{1,1}\right \vert ^{2}d\mu+\frac{1}{2}%
{\displaystyle \int \limits_{M}}
\left(  P_{0}f\right)  fd\mu=0.\label{26A}%
\end{equation}

\end{lemma}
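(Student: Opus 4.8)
The plan is to read (\ref{26A}) as a B\^{o}chner identity for the harmonic $(0,1)$-form $\gamma$, in which the pseudo-Einstein condition enters only to convert the single genuinely $f$-dependent term into $\frac{1}{2}\int_{M}(P_{0}f)f\,d\mu$. Two structural facts coming from the hypothesis will be used repeatedly: the harmonicity $\gamma_{\overline{1},1}=\gamma_{1,\overline{1}}=0$ furnished by Lemma \ref{l4}, and the third-order identity $\gamma_{1,0}=A_{11}\gamma_{\overline{1}}+iP_{1}f$, which is merely (\ref{0}) rewritten under $\widetilde{W_{1}}=0$ (equivalently, a rearrangement of (\ref{7})).

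First I would start from $\int_{M}\left\vert \gamma_{1,1}\right\vert ^{2}d\mu$ and integrate by parts in the $\theta^{\overline{1}}$-direction, which gives $\int_{M}\left\vert \gamma_{1,1}\right\vert ^{2}d\mu=-\int_{M}\gamma_{1,1\overline{1}}\,\gamma_{\overline{1}}\,d\mu$ with no torsion correction, since the boundary term is a genuine divergence. I would then commute the two horizontal derivatives by the Webster commutation relations (the identities (\ref{6}) recorded in the Appendix), writing $\gamma_{1,1\overline{1}}=\gamma_{1,\overline{1}1}+i\gamma_{1,0}+R\gamma_{1}$ and using $\gamma_{1,\overline{1}}=0$ to annihilate $\gamma_{1,\overline{1}1}$. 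This already isolates the Tanaka-Webster scalar curvature and reduces the integral to $\int_{M}\left\vert \gamma_{1,1}\right\vert ^{2}d\mu=-\int_{M}R\left\vert \gamma_{\overline{1}}\right\vert ^{2}d\mu-i\int_{M}\gamma_{1,0}\,\gamma_{\overline{1}}\,d\mu$.

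Next I would substitute $\gamma_{1,0}=A_{11}\gamma_{\overline{1}}+iP_{1}f$ into the last term, splitting it as $-i\int_{M}A_{11}\gamma_{\overline{1}}^{2}\,d\mu+\int_{M}(P_{1}f)\gamma_{\overline{1}}\,d\mu$. Since the left-hand side is real, I may pass to real parts throughout; comparing $-\mathrm{Re}\,[\,i\int_{M}A_{11}\gamma_{\overline{1}}^{2}\,d\mu\,]$ with the definition (\ref{33}) of $Tor$ under the metric identification $x^{1}=\gamma_{\overline{1}}$ shows that this piece is exactly $\frac{1}{2}\int_{M}Tor(\gamma,\gamma)\,d\mu$, so that $\int_{M}(R-\frac{1}{2}Tor)(\gamma,\gamma)\,d\mu$ appears with the correct sign. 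For the residual $f$-term I would integrate by parts once in $P_{1}f=f_{\overline{1}11}+iA_{11}f_{\overline{1}}$; the leading piece drops out because $\gamma_{\overline{1},1}=0$, leaving $\int_{M}(P_{1}f)\gamma_{\overline{1}}\,d\mu=\int_{M}iA_{11}f_{\overline{1}}\gamma_{\overline{1}}\,d\mu$. Matching this against the fourth-order equation (\ref{24}) (or, equivalently, the self-adjoint formula (\ref{10}) together with (\ref{7})) after one more integration by parts identifies $2\,\mathrm{Re}\int_{M}(P_{1}f)\gamma_{\overline{1}}\,d\mu=-\frac{1}{2}\int_{M}(P_{0}f)f\,d\mu$. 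Multiplying the assembled identity by $2$ and collecting the three contributions produces (\ref{26A}).

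The hard part will be the careful bookkeeping in the two middle steps: pinning down the exact curvature coefficient in $\gamma_{1,1\overline{1}}=\gamma_{1,\overline{1}1}+i\gamma_{1,0}+R\gamma_{1}$ from the Appendix, tracking which integrations by parts do and do not generate torsion corrections, and handling the conjugation and index conventions so that precisely the combination $2R-Tor$ — rather than a term carrying the wrong torsion index — survives. The one genuinely conceptual point is the final identification of the leftover $f$-term with $\frac{1}{2}\int_{M}(P_{0}f)f\,d\mu$, which is what links the purely $\gamma$-theoretic B\^{o}chner computation to the CR Paneitz operator and makes the identity close.
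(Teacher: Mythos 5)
Your proposal is correct and follows essentially the same route as the paper: both arguments amount to pairing the pseudo-Einstein relation $P_{1}f=iA_{11}\gamma_{\overline{1}}+R\gamma_{1}-\gamma_{1,1\overline{1}}$ against $\gamma_{\overline{1}}$, taking real parts to produce $\int_{M}(2R-Tor)(\gamma,\gamma)\,d\mu+2\int_{M}|\gamma_{1,1}|^{2}\,d\mu$, and then identifying the leftover mixed term $i\int_{M}(A_{11}f_{\overline{1}}\gamma_{\overline{1}}-A_{\overline{1}\overline{1}}f_{1}\gamma_{1})\,d\mu$ with $-\tfrac{1}{2}\int_{M}(P_{0}f)f\,d\mu$. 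The only cosmetic difference is that you close the last step via the fourth-order identity (\ref{24}), whereas the paper instead pairs the same relation against $f_{\overline{1}}$ and invokes the definition of $P_{0}$; the two are equivalent, since (\ref{24}) is itself obtained from (\ref{7}) by the same commutation and integration by parts.
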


\begin{proof}
From Theorem \ref{t1} and the commutation formula, it follows that%
\[
\widetilde{\theta}=e^{\frac{\left(  f+2u\right)  }{3}}\theta
\]
is a pseudo-Einstein contact form if and only if
\begin{equation}
P_{1}f=iA_{11}\gamma_{\overline{1}}+R\gamma_{1}-\gamma_{1,1\overline{1}%
}.\label{27A}%
\end{equation}

By the fact that $\gamma_{1,\overline{1}}=0$, it's easy to see
\[
\int_{M}\left(  P_{1}f\right)  \gamma_{\overline{1}}d\mu=\int_{M}\left(
f_{\overline{1}11}+iA_{11}f_{\overline{1}}\right)  \gamma_{\overline{1}}%
d\mu=i\int_{M}A_{11}f_{\overline{1}}\gamma_{\overline{1}}d\mu.
\]

Then, substituting $\left(  \ref{27A}\right)  $ into the last equality and
adding its conjugation, we have%
\begin{equation}
-\int_{M}Tor\left(  d_{b}f,\gamma \right)  d\mu=\int_{M}\left(  2R-Tor\right)
\left(  \gamma,\gamma \right)  d\mu+2\int_{M}\left \vert \gamma_{1,1}\right \vert
^{2}d\mu.\label{28A}%
\end{equation}

On the other hand, the equality $\left(  \ref{27A}\right)  $ and the
commutation formulas enable us to get%
\[%
\begin{array}
[c]{ccl}%
\int_{M}\left(  P_{1}f\right)  f_{\overline{1}}d\mu & = & \int_{M}\left(
iA_{11}\gamma_{\overline{1}}+R\gamma_{1}\right)  f_{\overline{1}}d\mu-\int
_{M}\gamma_{1}f_{\overline{1}\overline{1}1}d\mu \\
& = & \int_{M}\left(  iA_{11}\gamma_{\overline{1}}+R\gamma_{1}\right)
f_{\overline{1}}d\mu \\
&  & +\int_{M}\gamma_{1}\left(  -f_{\overline{1}1\overline{1}}+if_{\overline
{1}0}-R\gamma_{1}f_{\overline{1}}\right)  d\mu \\
& = & \int_{M}iA_{11}\gamma_{\overline{1}}f_{\overline{1}}d\mu+\int_{M}%
\gamma_{1}\left(  -f_{\overline{1}1\overline{1}}+if_{\overline{1}0}\right)
d\mu \\
& = & \int_{M}iA_{11}\gamma_{\overline{1}}f_{\overline{1}}d\mu+\int_{M}%
\gamma_{1}\left(  -f_{\overline{1}1\overline{1}}+if_{0\overline{1}%
}-iA_{\overline{1}\overline{1}}f_{1}\right)  d\mu \\
& = & i\int_{M}\left(  A_{11}\gamma_{\overline{1}}f_{\overline{1}%
}-A_{\overline{1}\overline{1}}\gamma_{1}f_{1}\right)  d\mu \\
& = & -\int_{M}Tor\left(  d_{b}f,\gamma \right)  d\mu.
\end{array}
\]

By the definition of \ the CR Paneitz operator, we obtain
\begin{equation}
\int_{M}\left(  P_{0}f\right)  fd\mu=-\int_{M}\left(  \left(  P_{1}f\right)
f_{\overline{1}}+\left(  P_{\overline{1}}f\right)  f_{1}\right)  d\mu
=2\int_{M}Tor\left(  d_{b}f,\gamma \right)  d\mu \label{29A}%
\end{equation}

Therefore, it follows from the equalities $\left(  \ref{28A}\right)  $ and
$\left(  \ref{29A}\right)  $ that
\[
\int_{M}\left(  2R-Tor\right)  \left(  \gamma,\gamma \right)  d\mu+2\int
_{M}\left \vert \gamma_{1,1}\right \vert ^{2}d\mu+\frac{1}{2}\int_{M}\left(
P_{0}f\right)  fd\mu=0.
\]

Then we are done.
\end{proof}

\bigskip

Such equality enables us to prove Theorem \ref{t2} as follows:

\begin{proof}
(Proof of Theorem \ref{t2}) From the equality $\left(  \ref{26A}\right)  $ and
the hypotheses, it is clear that if $\widetilde{\theta}=e^{\frac{\left(
f+2u\right)  }{3}}\theta$ is a pseudo-Einstein contact form, then%
\[
\gamma=0.
\]
Hence we can solve the inhomogeneous tangential Cauchy-Riemann equation
\[
\overline{\partial}_{b}\varphi=\sigma_{\overline{1}}\theta^{\overline{1}}%
\]
by Lemma \ref{l3}. Note that this implicitly implies $f$ is CR-pluriharmonic.
So the sufficient part is completed.

As for the necessary part, it's obvious from Theorem \ref{t1}.
\end{proof}

Before to go further, we need the following key lemma.

\begin{lemma}
\label{l5} Let $(M,J,\theta)$ be a closed strictly pseudoconvex CR
$3$-manifold with $c_{1}(T_{1,0}M)=0.$ With the notations as above, the
following equality holds%
\begin{equation}
\int_{M}\left(  R-\frac{1}{2}Tor-\frac{1}{2}Tor^{\prime}\right)  \left(
\gamma,\gamma \right)  d\mu+\int_{M}\left \vert \gamma_{1,1}\right \vert ^{2}%
d\mu+\int_{M}Qud\mu+\int_{M}\left(  P_{0}u^{\bot}\right)  u^{\bot}%
d\mu=0.\label{2018BB}%
\end{equation}

\end{lemma}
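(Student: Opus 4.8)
The plan is to imitate the proof of Lemma \ref{l8}, but to pair the fundamental identity of Lemma \ref{l4} against $u$ rather than against a pseudo-Einstein potential, so that the CR $Q$-curvature enters through its definition (\ref{c}). First I would rewrite (\ref{0}). Exactly as in the passage from Theorem \ref{t1} to (\ref{27A}), the commutation formula $\gamma_{1,1\overline{1}}=i\gamma_{1,0}+R\gamma_{1}$ (which uses $\gamma_{1,\overline{1}}=\gamma_{\overline{1},1}=0$ from (\ref{2})) converts (\ref{0}) into
\[
W_{1}=2P_{1}u+iA_{11}\gamma_{\overline{1}}+R\gamma_{1}-\gamma_{1,1\overline{1}},
\]
with no pseudo-Einstein hypothesis. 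Set $\Psi:=W_{1}-2P_{1}u=iA_{11}\gamma_{\overline{1}}+R\gamma_{1}-\gamma_{1,1\overline{1}}$; this is the exact analogue of the right-hand side of (\ref{27A}).

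The argument then rests on two integral identities, obtained by pairing $\Psi$ against $\gamma_{\overline{1}}$ and against $u_{\overline{1}}$, integrating by parts and adding the complex conjugate. Pairing with $\gamma_{\overline{1}}$ is precisely the computation that produced (\ref{28A}): using $\gamma_{\overline{1},1}=0$, the divergence theorem and the commutation formulas, one gets
\[
2\,\mathrm{Re}\int_{M}\Psi\gamma_{\overline{1}}\,d\mu=\int_{M}(2R-Tor)(\gamma,\gamma)\,d\mu+2\int_{M}\left\vert\gamma_{1,1}\right\vert^{2}d\mu.
\]
Pairing with $u_{\overline{1}}$ is where the $Q$-curvature appears. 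By (\ref{c}), $Q=-\mathrm{Re}\,W_{1,\overline{1}}$, so integrating by parts (the $\delta_{b}$ of a $(1,0)$-form integrates to zero, cf. (\ref{10})) gives $\int_{M}Qu\,d\mu=\mathrm{Re}\int_{M}W_{1}u_{\overline{1}}\,d\mu$. Substituting $W_{1}=2P_{1}u+\Psi$ and using (\ref{10}) in the form $2\,\mathrm{Re}\int_{M}(P_{1}u)u_{\overline{1}}\,d\mu=-\int_{M}(P_{0}u)u\,d\mu=-\int_{M}(P_{0}u^{\bot})u^{\bot}\,d\mu$ (self-adjointness and $P_{0}u_{\ker}=0$) yields
\[
\mathrm{Re}\int_{M}\Psi u_{\overline{1}}\,d\mu=\int_{M}Qu\,d\mu+\int_{M}(P_{0}u^{\bot})u^{\bot}\,d\mu.
\]

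Combining the two displays, the target (\ref{2018BB}) is equivalent to the single cross-term identity
\[
\mathrm{Re}\int_{M}\Psi\,(u_{\overline{1}}+\gamma_{\overline{1}})\,d\mu=\tfrac{1}{2}\int_{M}Tor^{\prime}(\gamma,\gamma)\,d\mu,
\]
as one checks by halving the first display, adding the second, and reading off the coefficients $\tfrac12$ on $Tor$ and $Tor^{\prime}$. To establish it I would use (\ref{18}) to write $u_{\overline{1}}+\gamma_{\overline{1}}=\sigma_{\overline{1}}-iv_{\overline{1}}$, then feed in the structure equations of Lemma \ref{l2}, which express $R$ and $A_{11,\overline{1}}$ through $\sigma$, together with the commutation formulas and the harmonicity of $\gamma$ (as $\gamma\in\ker\square_{b}$ it satisfies $\gamma_{\overline{1},1}=0$ and is $L^{2}$-orthogonal to $\mathrm{Im}\,\overline{\partial}_{b}$). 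The torsion-derivative term $Tor^{\prime}$, carried by $A_{11,\overline{1}}$ inside $W_{1}$, is exactly what survives.

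The main obstacle is this last step: tracking the integrations by parts and the several CR commutation identities carefully enough to see the mixed $u$–$\gamma$ contributions collapse to precisely $\tfrac12\int_{M}Tor^{\prime}(\gamma,\gamma)\,d\mu$, with the $\sigma$- and $v$-dependent pieces cancelling via harmonicity of $\gamma$ and Lemma \ref{l2}. As a consistency check, in the Sasakian case $A_{11}=0$ forces $\gamma_{1,0}=0$ (Corollary \ref{c2}), hence $\Psi=0$ and $Tor^{\prime}=0$, so both displayed identities and the cross-term identity hold trivially and reduce (\ref{2018BB}) to $\int_{M}R(\gamma,\gamma)\,d\mu+\int_{M}\left\vert\gamma_{1,1}\right\vert^{2}d\mu+\int_{M}Qu\,d\mu+\int_{M}(P_{0}u^{\bot})u^{\bot}\,d\mu=0$.
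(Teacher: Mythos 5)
Your reduction is correct as far as it goes, and it is essentially the paper's own argument in disguise: the paper also pairs the identity $W_{1}=2P_{1}u+i(A_{11}\gamma_{\overline{1}}-\gamma_{1,0})$ against $\gamma_{\overline{1}}$ and against $u_{\overline{1}}$, integrates by parts, and adds conjugates. Your two displayed identities (the B\^{o}chner-type evaluation of $2\,\mathrm{Re}\int_{M}\Psi\gamma_{\overline{1}}\,d\mu$ and the identity $\mathrm{Re}\int_{M}\Psi u_{\overline{1}}\,d\mu=\int_{M}Qu\,d\mu+\int_{M}(P_{0}u^{\bot})u^{\bot}\,d\mu$) are both correct, and the cross-term identity $\mathrm{Re}\int_{M}\Psi(u_{\overline{1}}+\gamma_{\overline{1}})\,d\mu=\tfrac{1}{2}\int_{M}Tor^{\prime}(\gamma,\gamma)\,d\mu$ to which you reduce the lemma is true. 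But that cross-term identity is exactly where the content of the lemma lives, and you leave it unproved: you only sketch a route via $u_{\overline{1}}+\gamma_{\overline{1}}=\sigma_{\overline{1}}-iv_{\overline{1}}$ and Lemma \ref{l2}, and you yourself flag it as ``the main obstacle.'' That route is not the natural one (the imaginary part $v$ and the components $\sigma_{1},\sigma_{0}$ do not obviously cancel without redoing the whole computation), so as written the proof is incomplete.

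The gap closes without ever introducing $\sigma$ or $v$, by evaluating each of your two pairings a \emph{second} way. For the $u$-pairing, use the un-commuted form $\Psi=i(A_{11}\gamma_{\overline{1}}-\gamma_{1,0})$ together with $\int_{M}\gamma_{1,0}u_{\overline{1}}\,d\mu=\int_{M}A_{\overline{1}\overline{1}}u_{1}\gamma_{1}\,d\mu$ (which follows from $\gamma_{1,\overline{1}}=0$ and the commutation relation $u_{\overline{1}0}=u_{0\overline{1}}-A_{\overline{1}\overline{1}}u_{1}$); this gives $\mathrm{Re}\int_{M}\Psi u_{\overline{1}}\,d\mu=-\int_{M}Tor(d_{b}u,\gamma)\,d\mu$. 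For the $\gamma$-pairing, write $\Psi=W_{1}-2P_{1}u$ and use $\int_{M}R_{,1}\gamma_{\overline{1}}\,d\mu=-\int_{M}R\,\gamma_{\overline{1},1}\,d\mu=0$ and $\int_{M}u_{\overline{1}11}\gamma_{\overline{1}}\,d\mu=0$; this gives $\mathrm{Re}\int_{M}\Psi\gamma_{\overline{1}}\,d\mu=\tfrac{1}{2}\int_{M}Tor^{\prime}(\gamma,\gamma)\,d\mu+\int_{M}Tor(d_{b}u,\gamma)\,d\mu$. Adding the two yields precisely your cross-term identity, the $Tor(d_{b}u,\gamma)$ contributions cancelling. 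This is in substance what the paper does: it equates the two evaluations of $\mathrm{Re}\int_{M}W_{1}\gamma_{\overline{1}}\,d\mu$ to get its equation (\ref{34}), equates the two evaluations of $\mathrm{Re}\int_{M}W_{1}u_{\overline{1}}\,d\mu$ to get $\int_{M}Qu\,d\mu+\int_{M}(P_{0}u)u\,d\mu=-\int_{M}Tor(d_{b}u,\gamma)\,d\mu$, and substitutes. You should supply these two computations to complete the argument; your consistency check in the Sasakian case is fine but does not substitute for them.
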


\begin{proof}
From the equality $\left(  \ref{0}\right)  $, we are able to get%
\[%
\begin{array}
[c]{ccl}%
\left(  R_{,1}-iA_{11,\overline{1}}\right)  \gamma_{\overline{1}} & = &
W_{1}\gamma_{\overline{1}}\\
& = & 2\left(  u_{\overline{1}11}+iA_{11}u_{\overline{1}}\right)
\gamma_{\overline{1}}+iA_{11}\gamma_{\overline{1}}\gamma_{\overline{1}%
}-i\gamma_{1,0}\gamma_{\overline{1}}\\
& = & 2\left(  u_{\overline{1}11}+iA_{11}u_{\overline{1}}\right)
\gamma_{\overline{1}}+iA_{11}\gamma_{\overline{1}}\gamma_{\overline{1}%
}-\left(  \gamma_{1,1\overline{1}}-R\gamma_{1}\right)  \gamma_{\overline{1}}.
\end{array}
\]

Taking the integration over $M$ of both sides and its conjugation, we have, by
the fact that $\gamma_{1,\overline{1}}=0$,
\[
i\int_{M}\left(  A_{11,\overline{1}}\gamma_{\overline{1}}-A_{\overline
{1}\overline{1},1}\gamma_{1}\right)  d\mu+\int_{M}\left(  2R-Tor\right)
\left(  \gamma,\gamma \right)  d\mu+2\int_{M}\left \vert \gamma_{1,1}\right \vert
^{2}d\mu-2\int_{M}Tor\left(  d_{b}u,\gamma \right)  d\mu=0.
\]
That is
\begin{equation}
\int_{M}\left(  R-\frac{1}{2}Tor-\frac{1}{2}Tor^{\prime}\right)  \left(
\gamma,\gamma \right)  d\mu+\int_{M}\left \vert \gamma_{1,1}\right \vert ^{2}%
d\mu-\int_{M}Tor\left(  d_{b}u,\gamma \right)  d\mu=0.\label{34}%
\end{equation}
On the other hand, it follows from the equality $\left(  \ref{0}\right)  $
that
\begin{equation}
\left(  R,_{1}-iA_{11},_{\overline{1}}\right)  u_{\overline{1}}=W_{1}%
u_{\overline{1}}=\left[  2P_{1}u+i\left(  A_{11}\gamma_{\overline{1}}%
-\gamma_{1,0}\right)  \right]  u_{\overline{1}}.\label{30A}%
\end{equation}
By the fact that $\gamma_{1,\overline{1}}=0$, we see that%
\begin{equation}%
\begin{array}
[c]{ccl}%
\int_{M}\gamma_{1,0}u_{\overline{1}}d\mu & = & \int_{M}\gamma_{1}%
u_{\overline{1}0}d\mu \\
& = & -\int_{M}\gamma_{1}\left(  u_{0\overline{1}}-A_{\overline{1}\overline
{1}}u_{1}\right)  d\mu \\
& = & \int_{M}A_{\overline{1}\overline{1}}u_{1}\gamma_{1}d\mu.
\end{array}
\label{31A}%
\end{equation}
It follows from $\left(  \ref{30A}\right)  \ $and$\  \left(  \ref{31A}\right)
$ that%
\[%
\begin{array}
[c]{l}%
\  \  \ 2\int_{M}Qud\mu+2\int_{M}\left(  P_{0}u\right)  ud\mu \\
=i\int_{M}\left[  \left(  A_{11}u_{\overline{1}}\gamma_{\overline{1}%
}-A_{\overline{1}\overline{1}}u_{1}\gamma_{1}\right)  -conj\right]  d\mu \\
=-2\int_{M}Tor\left(  d_{b}u,\gamma \right)  d\mu.
\end{array}
\]
Thus by (\ref{34})
\[
\int_{M}\left(  R-\frac{1}{2}Tor-\frac{1}{2}Tor^{\prime}\right)  \left(
\gamma,\gamma \right)  d\mu+\int_{M}\left \vert \gamma_{1,1}\right \vert ^{2}%
d\mu+\int_{M}Qud\mu+\int_{M}\left(  P_{0}u^{\bot}\right)  u^{\bot}d\mu=0.
\]

\end{proof}

\bigskip

\begin{proof}
(proof of Theorem \ref{t3} and Corollary \ref{c4}) \ If we assume that
\[
\ker P_{1}=\ker P_{0}.
\]
Then we also have
\begin{equation}
0=\int_{M}\left(  R-\frac{1}{2}Tor-\frac{1}{2}Tor^{\prime}\right)  \left(
\gamma,\gamma \right)  d\mu+\int_{M}\left \vert \gamma_{1,1}\right \vert ^{2}%
d\mu+\int_{M}Qu^{\bot}d\mu+\int_{M}\left(  P_{0}u^{\bot}\right)  u^{\bot}%
d\mu.\label{2018F}%
\end{equation}
Here we have used the fact that $P_{0}$ is self-adjoint and%
\[
\int_{M}Tor\left(  d_{b}u_{\ker},\gamma \right)  d\mu=0.
\]

Now if $\widetilde{\theta}=e^{\frac{\left(  f+2u\right)  }{3}}\theta$ is a
pseudo-Einstein contact form for any CR-pluriharmonic function $f,$ it follows
from (\ref{26A}) that
\[
\gamma=0
\]
and then
\[
0=\int_{M}Qu^{\bot}d\mu+\int_{M}\left(  P_{0}u^{\bot}\right)  u^{\bot}%
d\mu=\int_{M}Q^{\bot}u^{\bot}d\mu+\int_{M}\left(  P_{0}u^{\bot}\right)
u^{\bot}d\mu.
\]
By \ the H\H{o}rder inequality and essentially positivity of the CR Paneitz
operator, we have%
\[%
\begin{array}
[c]{ccl}%
\int_{M}Q^{\bot}u^{\bot}d\mu+\int_{M}\left(  P_{0}u^{\bot}\right)  u^{\bot
}d\mu & \geq & \Lambda \int_{M}(u^{\bot})^{2}d\mu-(\int_{M}(Q^{\bot})^{2}%
d\mu)^{\frac{1}{2}})(\int_{M}(u^{\bot})^{2}d\mu)^{\frac{1}{2}}\\
& \geq & [\Lambda(\int_{M}(u^{\bot})^{2}d\mu)^{\frac{1}{2}}-(\int_{M}(Q^{\bot
})^{2}d\mu)^{\frac{1}{2}}](\int_{M}(u^{\bot})^{2}d\mu)^{\frac{1}{2}}\text{ }%
\end{array}
\]
and then
\[
0\geq \Lambda(\int_{M}(u^{\bot})^{2}d\mu)^{\frac{1}{2}}-(\int_{M}(Q^{\bot}%
)^{2}d\mu)^{\frac{1}{2}}.
\]
Hence
\[
\int_{M}(Q^{\bot})^{2}d\mu \geq \Lambda^{2}\int_{M}(u^{\bot})^{2}d\mu.
\]

Furthermore, if he CR \ $Q$-curvature is pluriharmonic (i.e. $Q^{\bot}=0),$
then
\[
u^{\bot}=0
\]
and by (\ref{0})
\[
W_{1}=0.
\]
Hence $\theta$ is also a globally defined pseudo-Einstein contact form.
Moreover, if the torsion is vanishing, then $(M,J,\theta)$ is the Sasakian
space form.
\end{proof}

\bigskip

\begin{proof}
(proof of Theorem \ref{t3a} and Corollary \ref{c5}) As before
\[%
\begin{array}
[c]{ccl}%
\int_{M}Qu^{\bot}d\mu+\int_{M}\left(  P_{0}u^{\bot}\right)  u^{\bot}d\mu & = &
\int_{M}Q^{\bot}u^{\bot}d\mu+\int_{M}\left(  P_{0}u^{\bot}\right)  u^{\bot
}d\mu \\
& \geq & \Lambda \int_{M}(u^{\bot})^{2}d\mu \text{ }\\
& \geq & 0
\end{array}
\]
if
\begin{equation}
Q^{\bot}=0.\label{2018G}%
\end{equation}
It follows from (\ref{2018F}) that
\begin{equation}
0\geq \int_{M}\left(  R-\frac{1}{2}Tor-\frac{1}{2}Tor^{\prime}\right)  \left(
\gamma,\gamma \right)  d\mu+\int_{M}\left \vert \gamma_{1,1}\right \vert ^{2}%
d\mu.\label{2018BBB}%
\end{equation}
if (\ref{2018G}) holds. Hence%
\[
\gamma=0\text{ }%
\]
if the pseudohermitian curavture is $\left(  \frac{1}{2},\frac{1}{2}\right)
$\textit{-positive.} It follows from Theorem \ref{t2} that $M$ admits a
globally defined pseudo-Einstein contact form $\widetilde{\theta}%
=e^{\frac{\left(  f+2u\right)  }{3}}\theta$. \ 

Furthermore, if he CR \ $Q$-curvature is pluriharmonic (i.e. $Q^{\bot}=0),$
then
\[
u^{\bot}=0
\]
and by (\ref{0})
\[
W_{1}=0.
\]
Hence $\theta$ is also a globally defined pseudo-Einstein contact form.

Now if $(M,J,\theta)$ is spherical and pseudo-Einstein, we have
\[
W_{1}=R,_{1}-iA_{11,\overline{1}}=0
\]
and
\[
iR_{,11}=3RA_{11}+6iA_{11,0}-4A_{11,\overline{1}1}.
\]

By cancelling $R_{,11}$, one derives%
\[
3RA_{11}+6iA_{11,0}-3A_{11,\overline{1}1}=0.
\]

On the other hand, it follows from the commutation relation (\cite{l}) that%
\[
A_{11,1\overline{1}}-A_{11,\overline{1}1}=iA_{11,0}+2RA_{11},
\]

we obtain%
\[
-3RA_{11}+2A_{11,1\overline{1}}-3A_{11,\overline{1}1}=0
\]
and then%
\[
-2%
{\displaystyle \int_{M}}
\left \vert A_{11,1}\right \vert ^{2}d\mu+3%
{\displaystyle \int_{M}}
\left \vert A_{11,\overline{1}}\right \vert ^{2}d\mu=3%
{\displaystyle \int_{M}}
R\left \vert A_{11}\right \vert ^{2}d\mu.
\]
Moreover, if $R$ is positive constant, then
\[
\text{\ }A_{11,\overline{1}}=0
\]
and \
\[
-2%
{\displaystyle \int_{M}}
\left \vert A_{11,1}\right \vert ^{2}d\mu=3%
{\displaystyle \int}
R\left \vert A_{11}\right \vert ^{2}%
\]
which implies
\[
A_{11}=0.
\]
It follows that $(M,J,\theta)$ is the Sasakian space form with the positive
Tanaka-Webster constant scalar curvature and vanishing torsion.
\end{proof}

\bigskip

\begin{proof}
(proof of Theorem \ref{t4} and Corollary \ref{c6})

It follows from (\ref{2018BB}) that
\[
\int_{M}\left(  R-\frac{1}{2}Tor-\frac{1}{2}Tor^{\prime}\right)  \left(
\gamma,\gamma \right)  d\mu+\int_{M}\left \vert \gamma_{1,1}\right \vert ^{2}%
d\mu+\int_{M}\left(  P_{0}u^{\bot}\right)  u^{\bot}d\mu=0
\]
if
\[
Q=0.
\]
Then we are again in the line of (\ref{2018BBB}) and we have%
\[
\gamma=0
\]
under the assumptions. Again we have
\[
u^{\bot}=0
\]
and by (\ref{0})
\[
R_{,1}-iA_{11},_{\bar{1}}=2P_{1}u_{\ker}.
\]
Hence
\[
\widetilde{\theta}=e^{\frac{\left(  f+2u\right)  }{3}}\theta=e^{\frac{\left(
f+2u_{\ker}\right)  }{3}}\theta
\]
which is pseudo-Einstein. It follows from (\ref{0c}) that
\[
P_{1}u_{\ker}=0
\]
and then
\[
R_{,1}-iA_{11},_{\bar{1}}=0
\]
which is pseudo-Einstein as well. The argumet for CR uniformization theorem as
in Corollary \ref{c6} are easily derived from the previous one. Then we are done.
\end{proof}

\bigskip

\section{Appendix}

In this appendix, we introduce some basic notions from pseudohermitian
geometry as in \cite{l}.

\begin{definition}
\label{def:CR} Let $M$ be a smooth manifold and $\xi \subset TM$ a subbundle. A
\textbf{CR structure} on $\xi$ consists of an endomorphism $J:\xi
\rightarrow \xi$ with $J^{2}=-id$ such that the following integrability
condition holds.

1. If $X,Y\in \xi$, then so is $[JX,Y]+[X,JY]$.

2. $J([JX,Y]+[X,JY])=[JX,JY]-[X,Y]$.
\end{definition}

The CR structure $J$ can be extended to $\xi \otimes{\mathbb{C}}$, which we can
then decompose into the direct sum of eigenspaces of $J$. The eigenvalues of
$J$ are $i$ and $-i$, and the corresponding eigenspaces will be denoted by
$T^{1,0}$ and $T^{0,1}$, respectively. The integrability condition can then be
reformulated as
\[
X,Y\in T^{1,0} \text{ implies } [X,Y]\in T^{1,0}.
\]

Now consider a closed $2n+1$-manifold $M$ with a cooriented contact structure
$\xi=\ker \theta$. This means that $\theta \wedge d\theta^{n} \neq0 $. The
\textbf{Reeb vector field} of $\theta$ is the vector field $T$ uniquely
determined by the equations
\begin{equation}
{\theta}(T)=1,\quad \text{and}\quad d{\theta}(T,{\cdot}%
)=0.\label{eq:defining_Reeb}%
\end{equation}

A \textbf{pseudohermitian manifold} is a triple $(M^{2n+1},\theta,J),$ where
$\theta$ is a contact form on $M$ and $J$ is a CR structure on $\ker \theta$.
The \textbf{Levi form} $\left \langle \ ,\  \right \rangle $ is the Hermitian
form on $T^{1,0}$ defined by
\[
H(Z,W)=\left \langle Z,W\right \rangle =-i\left \langle d\theta,Z\wedge
\overline{W}\right \rangle .
\]

We can extend this Hermitian form $\left \langle \ ,\  \right \rangle $ to
$T^{0,1}$ by defining $\left \langle \overline{Z},\overline{W}\right \rangle
=\overline{\left \langle Z,W\right \rangle }$ for all $Z,W\in T^{1,0}$.
Furthermore, the Levi form naturally induces a Hermitian form on the dual
bundle of $T^{1,0}$, and hence on all induced tensor bundles.

We now restrict ourselves to \textbf{strictly pseudoconvex} CR manifolds, or
in other words compatible complex structures $J$. This means that the Levi
form induces a Hermitian metric $\langle \cdot,\cdot \rangle_{J,{\theta}}$ by
\[
\langle V,U\rangle_{J,{\theta}}=d{\theta}(V,JU).
\]
The associated norm is defined as usual: $|V|_{J,\theta}^{2}=\langle
V,V\rangle_{J,{\theta}}$. It follows that $H$ also gives rise to a Hermitian
metric for $T^{1,0}$, and hence we obtain Hermitian metrics on all induced
tensor bundles. By integrating this Hermitian metric over $M$ with respect to
the volume form $d\mu=\theta \wedge d\theta^{n}$, we get an $L^{2}$-inner
product on the space of sections of each tensor bundle.

The \textbf{pseudohermitian connection} or \textbf{Tanaka-Webster connection}
(\cite{ta}, \cite{we}) of $(J,\theta)$ is the connection $\nabla$ on
$TM\otimes \mathbb{C}$ (and extended to tensors) given in terms of a local
frame $\{Z_{\alpha}\}$ for $T^{1,0}$ by%

\[
\nabla Z_{\alpha}=\omega_{\alpha}{}^{\beta}\otimes Z_{\beta}, \quad \nabla
Z_{\bar{ \alpha}}=\omega_{\bar{\alpha}}{}^{\bar{\beta}}\otimes Z_{\bar{\beta}%
},\quad \nabla T=0,
\]
where $\omega_{\alpha}{}^{\beta}$ is the $1$-form uniquely determined by the
following equations:%

\begin{equation}%
\begin{split}
d\theta^{\beta} &  =\theta^{\alpha}\wedge \omega_{\alpha}{}^{\beta}%
+\theta \wedge \tau^{\beta}\\
\tau_{\alpha}\wedge \theta^{\alpha} &  =0\\
\omega_{\alpha}{}^{\beta}+\omega_{\bar{\beta}}{}^{\bar{\alpha}} &  =0.
\end{split}
\end{equation}
Here $\tau^{\alpha}$ is called the \textbf{pseudohermitian torsion}, which we
can also write as
\[
\tau_{\alpha}=A_{\alpha \beta}\theta^{\beta}.
\]
The components $A_{\alpha \beta}$ satisfy
\[
A_{\alpha \beta}=A_{\beta \alpha}.
\]
We often consider the \textbf{torsion tensor} given by
\[
A_{J,\theta}=A^{\alpha}{}_{\bar{\beta}}Z_{\alpha}\otimes \theta^{\bar{\beta}%
}+A^{\bar{\alpha}}{}_{\beta}Z_{\bar{\alpha}}\otimes \theta^{\beta}.
\]

We now consider the curvature of the Tanaka-Webster connection in terms of the
coframe $\{ \theta=\theta^{0},\theta^{\alpha},\theta^{\bar{\beta}}\} $. The
second structure equation gives
\[%
\begin{split}
\Omega_{\beta}{}^{\alpha}  &  =\overline{\Omega_{\bar{\beta}}{}^{\bar{\alpha}%
}}=d\omega_{\beta}{}^{\alpha}-\omega_{\beta}{}^{\gamma}\wedge \omega_{\gamma}%
{}^{\alpha},\\
\Omega_{0}{}^{\alpha}  &  =\Omega_{\alpha}{}^{0}=\Omega_{0}{}^{\bar{\beta}%
}=\Omega_{\bar{\beta}}{}^{0}=\Omega_{0}{}^{0}=0.
\end{split}
\]

In \cite[Formulas 1.33 and 1.35]{we}, Webster showed that the curvature
$\Omega_{\beta}{}^{\alpha}$ can be written as
\begin{equation}%
\begin{array}
[c]{c}%
\Omega_{\beta}{}^{\alpha}=R_{\beta}{}^{\alpha}{}_{\rho \bar{\sigma}}%
\theta^{\rho}\wedge \theta^{\bar{\sigma}}+W_{\beta}{}^{\alpha}{}_{\rho}%
\theta^{\rho}\wedge \theta-W^{\alpha}{}_{\beta \bar{\rho}}\theta^{\bar{\rho}%
}\wedge \theta+i\theta_{\beta}\wedge \tau^{\alpha}-i\tau_{\beta}\wedge
\theta^{\alpha},
\end{array}
\label{a}%
\end{equation}
where the coefficients satisfy
\[%
\begin{array}
[c]{c}%
R_{\beta \bar{\alpha}\rho \bar{\sigma}}=\overline{R_{\alpha \bar{\beta}\sigma
\bar{\rho}}}=R_{\bar{\alpha}\beta \bar{\sigma}\rho}=R_{\rho \bar{\alpha}%
\beta \bar{\sigma}},\  \ W_{\beta \bar{\alpha}\gamma}=W_{\gamma \bar{\alpha}\beta
}.
\end{array}
\]
In addition, by \cite[(2.4)]{l} the coefficients $W_{\alpha}{}^{\beta}{}%
_{\rho}$ are determined by the torsion,
\[
W_{\alpha}{}^{\beta}{}_{\rho}=A_{\alpha \rho,}{}^{\beta}.
\]
Contraction of (\ref{a}) yields%
\begin{equation}%
\begin{split}
\Omega_{\alpha}{}^{\alpha}=d\omega_{\alpha}{}^{\alpha} &  =R_{\rho \bar{\sigma
}}\theta^{\rho}\wedge \theta^{\bar{\sigma}}+W_{\alpha}{}^{\alpha}{}_{\rho
}\theta^{\rho}\wedge \theta-W_{\overline{\alpha}}{}^{\overline{\alpha}}{}%
_{\bar{\rho}}\theta^{\bar{\rho}}\wedge \theta \\
&  =R_{\rho \bar{\sigma}}\theta^{\rho}\wedge \theta^{\bar{\sigma}}+A_{\alpha
\rho}{}^{\alpha}\theta^{\rho}\wedge \theta-A_{\bar{\alpha}\bar{\rho}}{}%
^{\bar{\alpha}}\theta^{\bar{\rho}}\wedge \theta
\end{split}
\label{b}%
\end{equation}

We will denote components of covariant derivatives by indices preceded by a
comma. For instance, we write $A_{\alpha \beta,\gamma}$. Here the indices
$\{0,\alpha,\bar{\beta}\}$ indicate derivatives with respect to $\{T,Z_{\alpha
},Z_{\bar{\beta}}\}$. For derivatives of a scalar function, we will often omit
the comma. For example, $\varphi_{\alpha}=Z_{\alpha}\varphi,\  \varphi
_{\alpha \bar{\beta}}=Z_{\bar{\beta}}Z_{\alpha}\varphi-\omega_{\alpha}%
{}^{\gamma}(Z_{\bar{\beta}})Z_{\gamma}\varphi,\  \varphi_{0}=T \varphi$ for a
(smooth) function $\varphi$.

In particular, we define followings for $n=1$ For a real function $\varphi$,
the subgradient $\nabla_{b}$ is defined by $\nabla_{b}\varphi \in \xi$ and
$\left \langle Z,\nabla_{b}\varphi \right \rangle _{L_{\theta}}=d\varphi(Z)$ for
all vector fields $Z$ tangent to contact plane. Locally $\nabla_{b}%
\varphi=\varphi_{\bar{1}}Z_{1}+\varphi_{1}Z_{\bar{1}}$. We can use the
connection to define the subhessian as the complex linear map%

\[
(\nabla^{H})^{2}\varphi:T_{1,0}\oplus T_{0,1}\rightarrow T_{1,0}\oplus
T_{0,1},
\]
by
\[
(\nabla^{H})^{2}\varphi(Z)=\nabla_{Z}\nabla_{b}\varphi.\  \
\]
Also
\[
\Delta_{b}\varphi=Tr\left(  (\nabla^{H})^{2}\varphi \right)  =(\varphi
_{1\bar{1}}+\varphi_{\bar{1}1}).
\]
For all $Z=x^{1}Z_{1}\in T_{1,0}$, we define%

\[%
\begin{split}
Ric(Z,Z) &  =Wx^{1}x^{\bar{1}}=W|Z|_{L_{\theta}}^{2},\\
Tor(Z,Z) &  =2Re\ iA_{\bar{1}\bar{1}}x^{\bar{1}}x^{\bar{1}}.
\end{split}
\]

We also need the following commutation relations (\cite{l}).%
\begin{equation}%
\begin{array}
[c]{ccl}%
C_{I,01}-C_{I,10} & = & C_{I,\overline{1}}A_{11}-kC_{I,}A_{11,\overline{1}},\\
C_{I,0\overline{1}}-C_{I,\overline{1}0} & = & C_{I,1}A_{\overline{1}%
\overline{1}}-kC_{I,}A_{\overline{1}\overline{1},1},\\
C_{I,1\overline{1}}-C_{I,\overline{1}1} & = & iC_{I,0}+kWC_{I}.
\end{array}
\label{6}%
\end{equation}
Here $C_{I}$ denotes a coefficient of a tensor with multi-index $I$ consisting
of only $1$ and $\bar{1}$, and $k$ is the number of $1$'s minus the number of
$\bar{1}$'s in $I$.

\bigskip

\end{document}